\documentclass[12pt, reqno, a4, oneside]{amsart}
\usepackage[margin=1in]{geometry}

\usepackage{amssymb,latexsym,amsmath,amsthm,enumitem,mathrsfs}
\usepackage{xcolor,float}

\usepackage{hyperref}
\hypersetup{colorlinks=true,linkcolor=blue,citecolor=blue}

\title[Connection between low-lying zeros and central values]{A connection between low-lying zeros and central values of $L$-functions}

\author[Lesesvre]{Didier Lesesvre}
\address{CNRS – Université de Montréal CRM – CNRS \newline \indent Université de Lille -- Laboratoire Paul Painlevé, UMR 8524,
 59000 Lille, France}
\email{didier.lesesvre@univ-lille.fr}

\author[Suriajaya]{Ade Irma Suriajaya}
\address{Department of Mathematics and Statistics, San Jose State University
\newline \indent Faculty of Mathematics, Kyushu University, 744 Motooka, Nishi-ku \newline \indent Fukuoka 819-0395, Japan}
\email{adeirmasuriajaya@math.kyushu-u.ac.jp}

\numberwithin{equation}{section}

\newtheorem{conj}{Conjecture}
\newtheorem{prop}{Proposition}
\newtheorem{lem}{Lemma}
\newtheorem{thm}{Theorem}
\newtheorem{coro}{Corollary}
\newtheorem{hyp}{Hypothesis}
\theoremstyle{remark}
\newtheorem{rk}{Remark}

\newcommand{\GL}{\mathrm{GL}}

\date\today

\begin{document}

\begin{abstract}
We discuss the relation between statistics on low-lying zeros of $L$-functions and distribution of the associated central values.
More precisely, we deduce explicit conditional lower bounds toward the Keating-Snaith conjecture (on the distribution of central values of families of $L$-functions) from partial results toward the Rudnick-Sarnak density conjecture (on the one-level density for the low-lying zeros of these $L$-functions). We show in fact that the same crucial ingredient occurs in the classical approaches for proving both results, providing the connection. We precisely determine the relation between the type of symmetry of the family, the allowed Fourier support in its distributional statement, and the quality of the lower bounds obtained. 
\end{abstract}

\maketitle

This paper is the general pendant of \cite{RS24} (for quadratic twists of elliptic curves) and~\cite{LS} (for modular forms), proving analogous results in the setting of the Selberg class. It takes for granted properties of $L$-functions that are expected but out of reach or very difficult to prove in specific cases, in order to emphasize the general and robust ideas behind the proofs, as announced in \cite{RS24}.

\section{Introduction}
\label{sec:introduction}

\subsection{Families of \texorpdfstring{$L$}{L}-functions}
\label{subsec:intro-families}

Objects of very different nature have attached $L$-functions each of which can be seen as a generating function $L(s)$, with a corresponding conductor~$c(L)$ representing its complexity. This comprises the cases of sequences of integers, Dirichlet characters, number fields, Galois representations, elliptic curves, abelian varieties, geodesics on hyperbolic surfaces, etc. as explained in \cite{IS00-Lfun}. The functoriality principle postulates that all these $L$-functions arise as automorphic $L$-functions attached to automorphic representations on general linear groups~$\mathrm{GL}(n)$.

We consider the very general framework of \textit{families} $\mathcal{F}$ of $L$-functions in the sense of Sarnak: geometric families of automorphic $L$-functions, families of Artin $L$-functions, universal families of $L$-functions of automorphic representations on $\mathrm{GL}(n)$, or other families arising from these under number theoretic constraints, functorial transfers, spectral restrictions or algebraic constructions. See \cite{SST} for a more precise discussion. The family $\mathcal{F}$ is generally infinite but we assume it can be truncated to a finite family~$\mathcal{F}_X$ when restricted to those elements of size $c(L) \leqslant X$, for any $X>0$. Such families include all classical families such as Dirichlet characters with bounded conductor, elliptic curves of bounded rank, automorphic representations with prescribed ramification, modular forms with bounded weight or level, Maass forms with bounded eigenvalue, Rankin-Selberg $L$-functions, etc. All the properties effectively needed for $L$-functions in the family will be detailed in Section \ref{sec:$L$-functions}.

\subsection{Distribution of zeros and the density conjecture}
\label{subsec:intro-distribution-zeros}

The zeros of an $L$-function $L(s)$ carry important information about the underlying arithmetical object. Obtaining precise information about their distribution, of which the Grand Riemann Hypothesis is the typical expectation, has striking consequences, see \cite[pp. 712--713]{IS00-Lfun}.

The spacings of zeros of families of $L$-functions are well-understood; they are distributed along a universal law, independent of the exact family at hand, as proven by Rudnick and Sarnak \cite{RudSar94}. This recovers the behavior of spacings between eigenangles of the classical groups of random matrices.
It can however be expected that distribution of \textit{low-lying} zeros attached to every reasonable family of $L$-functions also behaves as the analogous distribution of the small eigenangles of the classical groups of random matrices, and this depends upon the specific setting under consideration, providing non-trivial information about the objects. This gives rise to the notion of \textit{type of symmetry} of a family.

More precisely, let $L(s)$ be an $L$-function (see Section \ref{sec:$L$-functions}). Consider its nontrivial zeros written in the form
\begin{equation}
\label{def:rho}
\rho = \frac{1}{2}+i\gamma, 
\end{equation}
where $\gamma$ is a priori a complex number.
We renormalize the mean spacing of the zeros to $1$ by setting
\begin{equation}
\label{def:gamma}
\tilde{\gamma} := \frac{\log c(L)}{2\pi} \gamma.
\end{equation}
Let $\phi$ be an even Schwartz function on $\mathbb{R}$ whose Fourier transform is compactly supported, in particular it admits an analytic continuation to all $\mathbb{C}$. The one-level density attached to~$L$ is defined by
\begin{equation}
\label{def:one-level-density}
D(L, \phi) := \sum_{\gamma} \phi\left(\tilde{\gamma}\right), 
\end{equation}
where the summation runs over the (imaginary parts of the) zeros of $L(s)$.
A wide literature has been published concerning the statistical behavior of low-lying zeros of families of $L$-functions, see \cite{SST} for a summary of various results or Table \ref{table:known-consequences} below. The analogy with the behavior or small eigenangles of random matrices led Katz and Sarnak \cite{katz_zeroes_1999} to formulate the so-called \emph{density conjecture}, claiming the same universality for the types of symmetry of families of $L$-functions as those arising for classical groups of random matrices.

\begin{conj}[Katz-Sarnak] \label{conj:katz-sarnak}
Let $\mathcal{F}$ be a family of $L$-functions, and $\mathcal{F}_X$ a finite truncation increasing to $\mathcal{F}$ when $X$ grows. Then for all even Schwartz function $\phi$ on $\mathbb{R}$ with compactly supported Fourier transform, there is one classical group $G$ among $\mathrm{U}$, $\mathrm{SO(even)}$, $\mathrm{SO(odd)}$, $\mathrm{O}$ or $\mathrm{Sp}$ such that
\begin{equation}
\label{density-conjecture}
\frac{1}{|\mathcal{F}_X|} \sum_{f \in \mathcal{F}_X} D(L, \phi) \xrightarrow[X\to\infty]{} \int_{\mathbb{R}} \phi(x) W_G(x)dx, 
\end{equation}
where $W_G(x)$ is the explicit density function modeling the distribution of the eigenangles of the corresponding group of random matrices. More precisely, for all $x \in \mathbb{R}$, 
\begin{align}
    W_{\rm U}(x) & = 1, \\
    W_{\rm O}(x) & = 1 + \frac{1}{2}\delta_0(x), \\
    W_{\rm SO(even)}(x) & = 1 + \frac{\sin 2\pi x}{2\pi x}, \\
    W_{\rm SO(odd)}(x) & = 1 - \frac{\sin 2\pi x}{2\pi x} + \frac{1}{2}\delta_0(x), \\
    W_{\rm Sp}(x) & = 1 - \frac{\sin 2\pi x}{2\pi x}, 
\end{align}
where $\delta_0$ is the Dirac distribution at $0$.
The family $\mathcal{F}$ is then said to have the type of symmetry~$G$.
\end{conj}

\begin{rk}
For families of $L$-functions associated to algebraic varieties over function fields, the type of symmetry is determined by the monodromy of the family, shedding light on the reason why zeros of $L$-functions are governed by groups of random matrices, see \cite{katz_zeroes_1999}. No such analogue is known for number fields, and no case of Conjecture \ref{conj:katz-sarnak} has been proven yet, even though restricted versions exist in various cases (see Table \ref{table:known-consequences}).
\end{rk}

\subsection{Distribution of central values and the Keating-Snaith conjecture}
\label{subsec:intro-distrib-central-values}

The values of the $L$-functions at special points are also of considerable importance, and appear to carry a lot of information, see \cite{IS00-Lfun}. For instance, in the automorphic forms setting, the non-vanishing at the central point of the $L$-function attached to a Rankin-Selberg product $\pi_1 \times \pi_2$ is conjectured to be related to the non-vanishing of automorphic periods of arithmetic significance, see \cite{GJR}. With a geometric flavor, given an elliptic curve, the Birch and Swinnerton-Dyer conjecture relates the order of vanishing of its associated $L$-function at the central point to the rank of that elliptic curve. One example coming from algebraic number theory states that special values of the $L$-function attached to some characters are related to important invariants of the underlying field extension, by means of the class number formula. The study of the distribution of such values is therefore of natural significance. 

The Keating-Snaith conjecture \cite{KeaSna00-L} predicts that the logarithmic central values $\log L(\tfrac12)$ are asymptotically distributed according to a normal distribution, with explicit mean and variance depending on the family.
 
\begin{conj}[Keating-Snaith] \label{conj:keating-snaith}
Let $\mathcal{F}$ be a family of $L$-functions of type of symmetry $\mathrm{SO(even)}$ or $\mathrm{Sp}$, and $\mathcal{F}_X$ a finite truncation increasing to $\mathcal{F}$ when $X$ grows. Then there exists~$M_{\mathcal{F}_X}$ and $V_{\mathcal{F}_X}$ such that, for any positive real numbers $\alpha < \beta$,
\begin{equation}
\label{keating-snaith-conjecture}
\frac{1}{|\mathcal{F}_X|}\left| \left\{ f \in \mathcal{F}_X \ : \ \frac{\log L(\tfrac12) - M_{\mathcal{F}_X}}{V_{\mathcal{F}_X}} \in (\alpha, \beta) \right\} \right| \xrightarrow[X\to\infty]{} \frac{1}{\sqrt{2\pi}} \int_\alpha^\beta e^{-x^2/2} dx,
\end{equation}
when $X$ grows to infinity. In other words, the family of the logarithms of the central values~$\log L(\tfrac12)$ equidistributes asymptotically with respect to a normal distribution with mean $M_{\mathcal{F}_X}$ and variance $V_{\mathcal{F}_X}$.
\end{conj}
When $L(\tfrac12)$ vanishes or is negative, then $\log L(\tfrac12)$ is considered to be $-\infty$ so that the condition in the above set is not satisfied and the corresponding $L$-function does not contribute to the proportion.

The only unconditional instance of such a result is the equidistribution of the family of $\log|\zeta(\tfrac12+it)|$ when $t$ varies, as proven by Selberg (unpublished but stated for general $L$-functions in \cite[Theorem 2]{Selberg89}, we refer the reader to the proof given in \cite[Theorem 6.1]{Tsang_thesis} and \cite{radziwill_selbergs_2018}, recently extended to more general cases \cite{CLW22_CLT} and also in a weighted version \cite{BELP25_wCLT}.
Note that in the vertical aspect, i.e. for vertical shifts $L(\tfrac12+it)$ of a given $L$-function $L(s)$ satisfying similar hypotheses to our class of $L$-functions defined in Section~\ref{sec:$L$-functions}, Bombieri and Hejhal \cite{BH} proved that Conjecture \ref{conj:keating-snaith} holds under the Generalized Riemann Hypothesis and strong zero-density results.

\section{\texorpdfstring{$L$}{L}-functions}
\label{sec:$L$-functions}

We introduce in this section a general class of $L$-functions, in the spirit of the Selberg class in line with \cite{BH}. To lighten notation, we denote the growing family by $\mathcal{F}$ instead of $\mathcal{F}_X$ and write $|\mathcal{F}|\to \infty$ to mean $X\to \infty$. We denote $c(\mathcal{F})$ the maximum of the conductors $c(L)$ for $L \in \mathcal{F}$.

\subsection{Definition of \texorpdfstring{$L$}{L}-functions}
\label{subsec:$L$-functions}

An $L$-function $L(s)$ is a Dirichlet series of the form
\begin{equation}
\label{dirichlet-series}
    L(s) = \sum_{n=1}^\infty \frac{a_L(n)}{n^s}
\end{equation}
for certain coefficients $a_L(n) \in \mathbb{R}$ called the coefficients of the $L$-function. The assumption that the coefficients are real is not important for the method, and general complex coefficients could be taken as the cost of putting conjugate in the orthogonality relations (Hypothesis~\ref{hyp:p-orthogonality}) and using the complex rather than real distribution in Proposition \ref{prop3}, as in \cite{BH}.
We impose this condition only to simplify the presentation.

We assume that this Dirichlet series converges (locally uniformly) on the right-half plane $\Re(s) > 1$; it is then holomorphic on this half-plane. We assume that this $L$-function admits an Euler product representation of the form
\begin{equation}
\label{euler-product}
    \prod_p \prod_{i=1}^d (1-\alpha_{L,i}(p)p^{-s})^{-1}
\end{equation}
when $\Re(s) > 1$, where the outer product runs over primes, the $\alpha_{L,i}(p)$ are complex numbers called the spectral parameters of $L(s)$, and $d \in \mathbb{N}$ is called the degree of the $L$-function. The Euler product \eqref{euler-product} in particular implies that the coefficients are multiplicative, i.e. that $a_L(nm) = a_L(n) a_L(m)$ if $n$ and $m$ are coprime integers. We introduce the log-derivative
\begin{equation}
    \frac{L'}{L}(s) = - \sum_{n \geqslant 1} \frac{\Lambda_L(n)}{n^s} \qquad \text{where} \ \Lambda_L(n) :=
    \begin{cases}
        \sum_{i=1}^d \alpha_{L,i}(p)^k \log p & \text{if } n=p^k, \\
        0 & \text{otherwise.}
    \end{cases}
\end{equation}

We moreover assume that there is an Archimedean factor
\begin{equation}
\label{def:gamma-factor}
    \gamma_L(s) = q_L^s \prod_{i=1}^d \Gamma\left( \lambda_{L,i}s + \mu_{L,i}\right) 
\end{equation}
where $q_L$ is an integer, $\lambda_{L,i} > 0$ and $\Re(\mu_{L,i}) \geqslant 0$, such that the completed $L$-function defined by $\Lambda_L(s) := \gamma_L(s) L(s)$ admits a meromorphic continuation to the whole complex plane, with a finite number of poles, all on the line $\Re(s)=1$, and satisfies the functional equation
\begin{equation}
\label{functional-equation}
    \xi_L(s) = \varepsilon_L \overline{\xi_L}(1-s)
\end{equation}
where $\overline{\xi}_L(s) = \overline{\xi_L(\bar{s})}$ and $\varepsilon_L$ is a complex number of modulus one called the sign of the functional equation.
The integer $q_L$ is called the \textit{arithmetic conductor} of $L$, and we define the \textit{analytic conductor} of~$L$ by 
\begin{equation}
\label{conductor}
c(L) := q_L \prod_{i=1}^d (1+|\mu_{L,i}|)
\end{equation}
which encapsulates the complexity of $L(s)$. We introduce in the remaining of this section further properties of $L$-functions that are much expected, but out of reach or only partially known in some specific settings.

\subsection{Bound on coefficients}

We need bounds on the spectral parameters and on the coefficients of the $L$-functions.

\begin{hyp}[Ramanujan conjecture] 
\label{Hyp:bound-spectral-parameters}
We assume that for each prime $p$ we have the bound, for all $\varepsilon > 0$ and all $1 \leqslant i \leqslant d$, 
\begin{equation}
\label{hyp:bound-spectral-parameters}
|\alpha_{L,i}(p)| \ll p^{\varepsilon}.
\end{equation}
 The implied constant does not depend on $p$ or on $L$.
\end{hyp}
This assumption has a strong interpretation in terms of the underlying object (e.g. for automorphic $L$-functions, it essentially states that the underlying automorphic representation is tempered). 
We can derive from \eqref{hyp:bound-spectral-parameters} that $a_L(n) \ll n^\varepsilon$.
We in particular deduce the following Rankin-type estimate:
\begin{equation}
    \sum_{p\leqslant X} \sum_{i=1}^d |\alpha_{L,i}(p)|^2 \ll X^{1+\varepsilon}.
\end{equation}

\subsection{Orthogonality in \texorpdfstring{$p$}{p}}
\label{subsec:hyp-bounds-spectral-parameters}

We assume the following Selberg orthogonality conjecture.
\begin{hyp}[Orthogonality in $p$] 
\label{hyp:p-orthogonality}
We have, for all $X \geqslant 1$ and two $L$-functions $L$ and $M$ in $\mathcal{F}$, 
\begin{equation}
    \sum_{p \leqslant X} \frac{a_L(p)\overline{a_M(p)}}{p} = \delta_{L=M}n_{\mathcal{F}}\log \log X + O(1), 
\end{equation}
for a certain constant $n_{\mathcal{F}} > 0$, and where $\delta_{L=M}$ is the characteristic function of $L=M$.
\end{hyp}

Note that assuming such an orthogonality relation implies in particular that the $L$-functions are primitive in the sense of Selberg, i.e. that they do not have any nontrivial common factors. We omit the precise definitions of these terminologies since Hypothesis~\ref{hyp:p-orthogonality} already includes the properties required in our method. We refer the readers to \cite{BH}, for instance, for further details.

\subsection{Hecke relations}

We assume a certain relation between the spectral parameters and the coefficients of $L$-functions, which can equivalently be phrased as the existence of an Euler product representation with a specific shape.

\begin{hyp}[Hecke relations] 
\label{Hyp:hecke-relations}
There exist constants $\gamma_{\mathcal{F}}$ and $\gamma_{1}$ such that, for all $L \in \mathcal{F}$, 
\begin{equation}
    \sum_{i=1}^d \alpha_{L,i}(p)^2 = a_L(p^2) + \gamma_1 a_L(p) + \gamma_{\mathcal{F}}.
\end{equation}
\end{hyp}

By identification of terms between the Dirichlet series and the Euler product expression of $L(s)$, we have that the sum of the $\alpha_{L,i}(p)$ is the coefficient $a_L(p)$. The strength of the above hypothesis is that it assumes that $\gamma_{\mathcal{F}}$ and $\gamma_1$ are constants depending only on $\mathcal{F}$, but not on $L\in \mathcal{F}$. The term~$\gamma_{\mathcal{F}}$ is the critical ``constant term" contributing to the type of symmetry of the family and to the mean of the logarithms of central values (see Theorem \ref{thm}). This assumption is made for simplicity and does not hold in every family: it suffices in practice to ensure that it is true on average over the family, since all the results stated in this paper are on average, but this choice has been made for ease of exposition.

\subsection{Orthogonality in \texorpdfstring{$\mathcal{F}$}{F}}

We assume the following orthogonality relation in $\mathcal{F}$: 
\begin{hyp}[Orthogonality in $\mathcal{F}$] \label{Hyp:tf2}
There exists $\gamma >0$ such that, for all $n \geqslant 2$, 
\begin{equation}
\label{hyp:tf2}
\frac{1}{|\mathcal{F}|} \sum_{L\in \mathcal{F}} a_L(n)a_L(m) = \delta_{n=m} + O(|\mathcal{F}|^{-\gamma}).
\end{equation}
\end{hyp}
The above hypothesis quantifies the fact that there are oscillations in the coefficients $a_L(n)$ when it is not trivial (which we consider here to be the $n=1$ case only for simplicity, but in practice it may occur for other small set of exceptional values, e.g. $n$ being a square when~$a_L$ is a quadratic character, etc.), and that there are compensations incurring a power saving compared to the main term in \eqref{hyp:tf2}. It stems in practice from trace formulas, such as Poisson summation formula for characters, Petersson trace formula for modular forms, etc.

From the above orthogonality statement, we deduce a mean value theorem, analogous to \cite[Lemma 4]{BH}.
\begin{coro}[Mean value theorem]
\label{coro:mean-value-theorem}
    For all $x < |\mathcal{F}|^{\gamma/2 - \varepsilon}$ for a certain $\varepsilon>0$, we have 
    \begin{equation}
        \frac{1}{|\mathcal{F}|}\sum_{L \in \mathcal{F}} \left( \sum_{n<x} a_L(n) \right) \left( \sum_{m<x} a_L(m) \right) = \frac{1}{|\mathcal{F}|}\sum_{L \in \mathcal{F}} \sum_{n<x} a_L(n)^2 + o(1). 
    \end{equation}
\end{coro}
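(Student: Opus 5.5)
Expand the product on the left-hand side and exchange the order of summation, so that it becomes
\begin{equation*}
\sum_{n<x}\sum_{m<x} \frac{1}{|\mathcal{F}|}\sum_{L\in\mathcal{F}} a_L(n)a_L(m).
\end{equation*}
Split this double sum into the diagonal $n=m$ and the off-diagonal $n\neq m$. The diagonal part is exactly the right-hand side of the statement, namely $\frac{1}{|\mathcal{F}|}\sum_{L} \sum_{n<x} a_L(n)^2$, and it is kept untouched. It then remains to show that the off-diagonal part is $o(1)$.

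For the off-diagonal terms with $n,m\geqslant 2$ and $n\neq m$, we apply Hypothesis~\ref{Hyp:tf2}: since $\delta_{n=m}=0$, each inner average over the family is $O(|\mathcal{F}|^{-\gamma})$, uniformly in $n,m$. There are at most $x^2$ such pairs, so their total contribution is $O(x^2|\mathcal{F}|^{-\gamma})$. Under the assumption $x<|\mathcal{F}|^{\gamma/2-\varepsilon}$, this is $O(|\mathcal{F}|^{-2\varepsilon})=o(1)$. This is exactly where the threshold $\gamma/2$ in the statement comes from.

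The main obstacle is the pairs in which one index equals $1$, since Hypothesis~\ref{Hyp:tf2} is stated only for $n\geqslant 2$. Because $a_L(1)=1$ (from the Euler product), the terms with $n=1$, $m\geqslant 2$ contribute $\sum_{2\leqslant m<x}\frac{1}{|\mathcal{F}|}\sum_L a_L(m)$, and the pairs with $m=1$ give the same by symmetry. To control these, I would read Hypothesis~\ref{Hyp:tf2} with $n=1$, $m\geqslant 2$ as the statement that the family average of $a_L(m)$ is $O(|\mathcal{F}|^{-\gamma})$ for every $m\geqslant2$. This is the intended meaning of the remark following the hypothesis, that the only non-oscillating case is $n=1$, i.e.\ that the coefficients have no bias apart from $a_L(1)$. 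With that reading, these terms contribute $O(x|\mathcal{F}|^{-\gamma})=o(1)$, which is even better than the bound in the previous step. If that reading is not allowed, the first fallback would be a Cauchy--Schwarz argument. However, Cauchy--Schwarz still needs a bound on the family averages of $a_L(m)$, so it gives no real control in this case. The honest conclusion is therefore that the lemma requires, and implicitly assumes, the first-moment cancellation $\frac{1}{|\mathcal{F}|}\sum_L a_L(m)=O(|\mathcal{F}|^{-\gamma})$ for $m\geqslant 2$.

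Finally, combining the diagonal term with the two $o(1)$ off-diagonal contributions gives the stated identity.
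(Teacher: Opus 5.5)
Your argument is correct and is exactly the deduction the paper intends, which gives no separate proof beyond invoking Hypothesis~\ref{Hyp:tf2}: expand the product, keep the diagonal as the right-hand side, and bound the off-diagonal by $x^2|\mathcal{F}|^{-\gamma}=O(|\mathcal{F}|^{-2\varepsilon})$. Your worry about the index $1$ is unnecessary, because Hypothesis~\ref{Hyp:tf2} restricts only $n\geqslant 2$ and leaves $m$ free; taking $m=1$ there (as the paper itself does in the case $k=1$ of the proof of Proposition~\ref{prop3}) and using the symmetry of $a_L(n)a_L(m)$ already gives the first-moment cancellation, with no extra assumption.
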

This result, and its iterations, will be useful to study moments of sums of coefficients, as will be critical below.

\subsection{Off-diagonal contributions}

Our method requires to finely understand averages of coefficients over $L\in \mathcal{F}$ and over primes $p$ simultaneously. While the orthogonality assumption of Hypothesis \ref{Hyp:tf2} controls such averages when the summation over primes $p$ is small enough, we need to explain the behavior of longer sums of coefficients. This is the content of the following assumption.

\begin{hyp}[Beyond orthogonality] \label{Hyp:tf}
There exists $\delta > 0$ such that, for a compactly supported function $\phi$ with support in $(-\delta, \delta)$, we have
\begin{equation}
\label{eq:orthogonality}
    \frac{2}{|\mathcal{F}|}\sum_{L \in \mathcal{F}} \sum_p \frac{a_L(p^k)}{p^{k/2}} \widehat{\phi}\left( \frac{k\log p}{\log c(\mathcal{F})} \right) \frac{\log p}{\log c(\mathcal{F})} =
    \begin{cases}
        o(1) & \text{if } k = 2, \\
        H_\phi + o(1) & \text{if } k=1, 
    \end{cases}
\end{equation}
where $H_\phi$ is a continuous linear functional in $\phi$, which can be written as $H_\phi = \int h_{\mathcal{F}} \phi$ for a certain function $h_{\mathcal{F}}$. We have $H_\phi = 0$ if $\delta < 1$.
\end{hyp}

The mixed orthogonality relation (in $L$ and $p$) is assumed for a single $\delta >0$, and this corresponds to the restriction toward the density conjecture as always displayed in existing results.
Applying the hypothesis for $\delta \leqslant 1$, we deduce
\begin{align}
\label{coarse}
    \frac{1}{|\mathcal{F}|}\sum_{L \in \mathcal{F}} \sum_p \frac{a_L(p)}{p^{1/2}} w(p) & =o(1),\\ \label{coarse-2}
     \frac{1}{|\mathcal{F}|}\sum_{L \in \mathcal{F}} \sum_p \frac{a_L(p^2)}{p} w(p^2) & =o(1),
\end{align}
where $w(p)$ is any bounded function in $p$ truncating the summation to $p \ll c(L)$. However, when the summation over $p$ is very long (compared to the family), viz. $\delta > 1$, the sum may get too large; this is why we require the more precise assumption above. The discrepancy $H_\phi$ is fundamental since it may occur in practice and it does contribute to the type of symmetry of certain families, see for instance \cite[Section 8]{ILS}.

We also deduce from the Hecke relations in Hypothesis \ref{Hyp:hecke-relations} and from the orthogonality statement in Hypothesis \ref{Hyp:tf} the following asymptotic estimate:
\begin{equation}
    \label{eq:rankin-selberg-on-average}
    \frac{1}{|\mathcal{F}|}\sum_{L\in \mathcal{F}} \sum_{p<x} \frac{\Lambda_L(p^2)}{p \log p} = \frac{1}{|\mathcal{F}|}\sum_{L\in \mathcal{F}} \sum_{p<x} \frac{a_L(p^2) + \gamma_1 a_L(p)}{p} + \gamma_{\mathcal{F}} \sum_{p<x} \frac{1}{p} \sim \gamma_{\mathcal{F}} \log \log x, 
\end{equation}
for all $x>2$, since the average over coefficients $a_L(p)$ and $a_L(p^2)$ is negligible by \eqref{coarse} and~\eqref{coarse-2}. This can be seen as a Rankin-Selberg statement on average; for instance it is known for modular forms as in \cite[(4.3)]{RS24} without averaging.

\subsection{Main result}

Under the assumptions of Section \ref{sec:$L$-functions}, we deduce partial results toward both Conjectures \ref{conj:katz-sarnak} and \ref{conj:keating-snaith}. We remark that the proof of both statements in our main theorem below, rely on the same techniques (and indeed, all the assumptions of Section \ref{sec:$L$-functions} are made so that these techniques work), but the statement on low-lying zeros is used to prove the statement on central values, hence shedding light on the statement made in \cite{RS24} that results on low-lying zeros lead to results on central values. Explaining these connections is the main purpose of this paper.

\begin{thm} \label{thm}
Let $\mathcal{F}$ be a family of $L$-functions as defined in Section \ref{sec:$L$-functions}, and assume the Generalized Riemann Hypothesis. Then
\begin{enumerate}
    \item (Low-lying zeros) For every Schwartz function $\phi$ such that $\widehat{\phi}$ is compactly supported in $(-\delta, \delta)$, we have
    \begin{equation}\label{Hyp:weighted-llz}
        \frac{1}{|\mathcal{F}|} \sum_{L \in \mathcal{F}} D(L, \phi) \longrightarrow \int_{\mathbb{R}} \phi(x) W_{\mathcal{F}}(x) dx, 
    \end{equation}
    where $W_{\mathcal{F}} = 1 - \tfrac{1}{2}\gamma_{\mathcal{F}}\delta_0 + h_{\mathcal{F}}$ and $\delta_0$ represents the Dirac mass at $0$. Moreover, the proportion of $L\in \mathcal{F}$ such that $L(\tfrac12)$ does not vanish is at least $\eta(\mathcal{F},\delta)$, which only depends on the type of symmetry of $\mathcal{F}$ and the value of $\delta$. Values of $\eta(\mathcal{F},\delta)$ are given in Table~\ref{table:eta}.
    \\
    \item (Central values) For the same proportion~$\eta(\mathcal{F}, \delta) \in [0,1]$, we have
\begin{equation}
\label{eq:thm}
\frac{1}{|\mathcal{F}|}\left| \left\{ f \in \mathcal{F} \ : \ \frac{\log L(\tfrac12) - M_{\mathcal{F}}}{V_{\mathcal{F}}} \in (\alpha, \beta) \right\} \right| \geqslant \frac{\eta(\mathcal{F}, \delta)}{\sqrt{2\pi}} \int_\alpha^\beta e^{-x^2/2} dx, 
\end{equation}
where $M_{\mathcal{F}} = \tfrac12 \gamma_{\mathcal{F}} n_{\mathcal{F}}\log \log c(\mathcal{F})$ and $V_{\mathcal{F}} = \sqrt{n_{\mathcal{F}}\log \log c(\mathcal{F})}$. 
\begin{center}
\begin{table}[ht]
\begin{tabular}{|c||c|c|c|c|c|c|}
 \hline
  & \multicolumn{3}{| c |}{{Any} $\varepsilon_L$} & \multicolumn{3}{| c |}{$\varepsilon_L = 1$} \\
  \hline
 \emph{Types} & $\delta <1$ & $\delta \geqslant 1$ & $\delta = \infty$ & $\delta <1$ & $\delta \geqslant 1$ & $\delta = \infty$ \\
 \hline\hline
 $\mathrm{O}$ & $\tfrac{1}{2} - \tfrac{1}{\delta }$ & $\tfrac{1}{2} - \tfrac{1}{\delta }$ & $\tfrac12$ & $\tfrac{3}{4} - \tfrac{1}{2\delta }$ & $\tfrac{3}{4} - \tfrac{1}{2\delta }$ & $\tfrac34$ \\
 \hline
  $\mathrm{SO(even)}$ & $\tfrac{1}{2} - \tfrac{1}{\delta }$ & $1 - \tfrac{2}{\delta } +\tfrac{1}{2\delta ^2}$ & $1$ & $\tfrac{3}{4} - \tfrac{1}{2\delta }$ & $1 - \tfrac{1}{\delta } +\tfrac{1}{4\delta ^2}$ & $1$ \\
  \hline
  $\mathrm{SO(odd)}$ & $\tfrac{1}{2} - \tfrac{1}{\delta }$ & $- \tfrac{1}{2\delta ^2}$ & $0$ & $\tfrac{3}{4} - \tfrac{1}{2\delta }$ & $\tfrac12 - \tfrac{1}{4\delta ^2}$ & $\tfrac12$ \\
 \hline
    $\mathrm{Sp}$ & $\tfrac{3}{2} - \tfrac{1}{\delta }$ & $1 - \tfrac{1}{2\delta ^2}$ & $1$ & $\tfrac{5}{4} - \tfrac{1}{2\delta }$ & $1 - \tfrac{1}{4\delta ^2}$ & $1$ \\ 
 \hline
    $\mathrm{U}$ & $1-\tfrac{1}{\delta }$ & $ 1 - \tfrac{1}{\delta }$ & $1$ & $1-\tfrac{1}{2\delta }$ & $ 1 - \tfrac{1}{2\delta }$ & $1$ \\
 \hline
\end{tabular}
\caption{Description of $\eta(\mathcal{F},\delta)$, emphasizing cases where $\varepsilon_L=1$ for all $L \in \mathcal{F}$}
\label{table:eta}
\end{table}
\end{center}
\end{enumerate}
\end{thm}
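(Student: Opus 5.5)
For part (1), I will apply the explicit formula to $D(L,\phi)$ for each $L\in\mathcal{F}$. Under GRH all $\gamma$ are real. Shifting contours against $\frac{\Lambda_L'}{\Lambda_L}$ and using the functional equation~\eqref{functional-equation}, I expect
\begin{equation*}
D(L,\phi)=\widehat{\phi}(0)\frac{\log c(L)}{\log c(\mathcal{F})}+O\Bigl(\frac{1}{\log c(\mathcal{F})}\Bigr)-\frac{2}{\log c(\mathcal{F})}\sum_{p,k}\frac{\Lambda_L(p^k)}{p^{k/2}}\widehat{\phi}\Bigl(\frac{k\log p}{\log c(\mathcal{F})}\Bigr).
\end{equation*}
The $\Gamma$-factors contribute through Stirling's formula, which gives the conductor term. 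The factor $\log c(L)/\log c(\mathcal{F})$ tends to $1$ on average, assuming that most conductors are of size $c(\mathcal{F})^{1+o(1)}$.

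I then split the prime sum by $k$.
\begin{itemize}
\item The terms with $k\ge 3$ are $O(1/\log c(\mathcal{F}))$ by Hypothesis~\ref{Hyp:bound-spectral-parameters}.
\item For $k=1$ we have $\Lambda_L(p)=a_L(p)\log p$. Averaging over $\mathcal{F}$ and applying Hypothesis~\ref{Hyp:tf} gives $-H_\phi=-\int h_{\mathcal{F}}\phi$, up to the sign convention for $h_{\mathcal{F}}$ in that hypothesis.
\item For $k=2$, Hypothesis~\ref{Hyp:hecke-relations} writes $\Lambda_L(p^2)=(a_L(p^2)+\gamma_1a_L(p)+\gamma_{\mathcal{F}})\log p$. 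The $a_L(p^2)$ and $a_L(p)$ parts vanish on average by Hypothesis~\ref{Hyp:tf} and by~\eqref{coarse}. The constant part gives, via the prime number theorem,
\begin{equation*}
-\frac{2\gamma_{\mathcal{F}}}{\log c(\mathcal{F})}\sum_p\frac{\log p}{p}\widehat{\phi}\Bigl(\frac{2\log p}{\log c(\mathcal{F})}\Bigr)\to-\gamma_{\mathcal{F}}\int_0^\infty\widehat{\phi}(2u)\,du=-\tfrac12\gamma_{\mathcal{F}}\phi(0).
\end{equation*}
\end{itemize}
Together these give $W_{\mathcal{F}}=1-\frac12\gamma_{\mathcal{F}}\delta_0+h_{\mathcal{F}}$.

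For the non-vanishing proportion I will use the standard positivity argument. Take $\phi\ge0$ with $\phi(0)=1$ and $\operatorname{supp}\widehat{\phi}\subset(-\delta,\delta)$. Under GRH, a zero at $\frac12$ of order $m_L$ contributes $m_L\phi(0)$ to $D(L,\phi)$. By the functional equation, $m_L$ is even when $\varepsilon_L=1$ and odd when $\varepsilon_L=-1$, so $m_L\geq 2$ whenever $L(\tfrac12)=0$ and $\varepsilon_L=1$. This gives
\begin{equation*}
\mathbb{P}(L(\tfrac12)=0)\le \frac{1}{c}\,\frac{1}{|\mathcal{F}|}\sum_L D(L,\phi),
\end{equation*}
with $c=1$ in general and $c=2$ when $\varepsilon_L=1$. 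I then optimize $\int\phi W_{\mathcal{F}}$ over such $\phi$. The optimizer is the Fourier pair of a convolution square of the indicator of $[-\delta/2,\delta/2]$, giving $\phi(x)=\bigl(\frac{\sin\pi\delta x}{\pi\delta x}\bigr)^2$. Computing $\int\phi W_G$ case by case, using the Plancherel formula against $\widehat{W_G}$, is expected to reproduce $1-\eta$ in Table~\ref{table:eta}. The change of formula at $\delta=1$ comes from the kink in $\widehat{\tfrac{\sin2\pi x}{2\pi x}}$ at $|u|=1$, together with $H_\phi$ becoming nonzero.

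For part (2), I will follow the Radziwiłł–Soundararajan strategy. The first step is an upper-bound inequality: under GRH, for $x=c(\mathcal{F})^{\theta/\log\log c(\mathcal{F})}$ and $L(\frac12)\ne0$,
\begin{equation*}
\log L(\tfrac12)\le \sum_{p\le x}\frac{a_L(p)}{p^{1/2}}+\frac12\sum_{p\le\sqrt x}\frac{\Lambda_L(p^2)}{p\log p}+O\Bigl(\frac{\log c(L)}{\log x}\Bigr)+\sum_\gamma\log\bigl(\text{local zero term}\bigr).
\end{equation*}
This comes from the explicit formula for $\log L(\sigma)$ with $\sigma$ slightly to the right of $\frac12$, as in~\cite{RS24}. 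The zero-sum term is controlled on average by $D(L,\phi)$ with a suitable $\phi$; this is where part (1) enters.

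The second step evaluates the averages. By~\eqref{eq:rankin-selberg-on-average}, the $p^2$ sum equals $\frac12\gamma_{\mathcal{F}}\log\log x+o(\cdot)$ on average, which produces the mean $M_{\mathcal{F}}$. By Corollary~\ref{coro:mean-value-theorem} and Hypothesis~\ref{hyp:p-orthogonality}, the moments of $P(L)=\sum_{p\le x}a_L(p)p^{-1/2}$ match those of a Gaussian with variance $n_{\mathcal{F}}\log\log c(\mathcal{F})$. The range $x<|\mathcal{F}|^{\gamma/2-\varepsilon}$ allows moments of order $k\ll\log\log$, so $P(L)$ is asymptotically normal.

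The remaining step converts the upper bound on $\log L(\tfrac12)$ into the lower bound~\eqref{eq:thm}. I restrict to the set $\mathcal{G}$ of $L$ with $L(\frac12)\neq0$ and with no zero closer than $\asymp1/\log c(\mathcal{F})$ in a suitable averaged sense. On $\mathcal{G}$, a reverse inequality also holds with an $o(V_{\mathcal{F}})$ error. Its density is at least $\eta(\mathcal{F},\delta)$ by the same one-level density computation with a mollified test function. A mild independence between membership in $\mathcal{G}$ and the value of $P(L)$ then gives the Gaussian proportion on a set of density at least $\eta$.

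I expect this last step to be the main obstacle, specifically the independence between the zero-counting condition and the Gaussian statistic. I would first try to prove it by computing mixed moments $\mathbb{E}[P(L)^k D(L,\phi)]$ with the same explicit-formula and orthogonality inputs, showing they factor asymptotically.
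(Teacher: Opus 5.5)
Your part (1) follows the paper: explicit formula, $k\geqslant 3$ removed by Ramanujan, $-\tfrac12\gamma_{\mathcal F}\phi(0)$ from the Hecke relation at $k=2$, $H_\phi$ from Hypothesis~\ref{Hyp:tf}, and positivity with the Fej\'er kernel, which is exactly what produces Table~\ref{table:eta}. The skeleton of part (2) is also the paper's, including the key idea that the mixed moments $\mathbb{E}[P(L)^kD(L,\phi)]$ should factor. However, two steps of part (2) fail as written.

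\textbf{First, $x=c(\mathcal F)^{\theta/\log\log c(\mathcal F)}$ is the wrong scale.} In your inequality, as in the paper's Proposition~\ref{prop2}, the conductor term $O(\log c(L)/\log x)$ is an error term. With this $x$ it has size $\theta^{-1}\log\log c(\mathcal F)$, far larger than $V_{\mathcal F}\asymp\sqrt{\log\log c(\mathcal F)}$. The crude zero term $\sum_\gamma\log(1+(\gamma\log x)^{-2})$ is of the same size, since on average $L$ has $\asymp\log c(L)/\log x$ zeros with $|\gamma|\leqslant 1/\log x$. So the reverse inequality on $\mathcal G$ with error $o(V_{\mathcal F})$ cannot be obtained. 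Your choice of $x$ suits upper bounds for deviations of size $\log\log c(\mathcal F)$, not a central limit theorem. You need $\log c(\mathcal F)/\log x\to\infty$ with $\log c(\mathcal F)/\log x=o(\sqrt{\log\log c(\mathcal F)})$, so that still $\log\log x\sim\log\log c(\mathcal F)$; the paper takes $x=X^{1/\log\log\log X}$. Moments of order $\log\log$ are then unavailable, but the method of moments only needs each fixed $k$. Hypothesis~\ref{Hyp:tf2} handles the off-diagonal terms once $x<|\mathcal F|^{2\gamma/k}$, which holds for every fixed $k$ (granting $\log|\mathcal F|\gg\log c(\mathcal F)$, as the paper implicitly does).

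\textbf{Second, the last step is not an independence statement about $\mathcal G$.} No separate bound $|\mathcal G|\geqslant\eta|\mathcal F|$ is used, and the mixed moments give no access to $\mathbf{1}_{\mathcal G}$, which is not a function of $D(L,\phi)$. What is missing is the one-sided positivity argument of Proposition~\ref{prop4}. Call $L$ bad if it has a zero with $|\gamma|\leqslant\ell=(\log X\log\log X)^{-1}$. Since $\ell\log c(\mathcal F)\to0$, the Fej\'er kernel gives $D(L,\phi)\geqslant\nu(1-\epsilon)$ for bad $L$. Here $\nu=2$ if all $\varepsilon_L=1$ (small zeros then come as $\pm\gamma$, or with even multiplicity at $\tfrac12$), and $\nu=1$ otherwise. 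Write $\widetilde{P}_L=P_L(x)/\sqrt{n_{\mathcal F}\log\log x}$. Using $D(L,\phi)\geqslant 0$ for every $L$,
\[
\#\bigl\{L\ \text{not bad}:\ \widetilde{P}_L\in(\alpha,\beta)\bigr\}\geqslant\sum_{\widetilde{P}_L\in(\alpha,\beta)}\Bigl(1-\frac{D(L,\phi)}{\nu(1-\epsilon)}\Bigr).
\]
The factorization you propose to prove is \eqref{prop3-2}. Together with polynomial approximation of $\mathbf{1}_{(\alpha,\beta)}$ (Proposition~\ref{coro:moments}), it evaluates the right-hand side as $(1-\kappa/\nu+O(\epsilon)+o(1))M(\alpha,\beta)|\mathcal F|$, i.e.\ $\eta(\mathcal F,\delta)M(\alpha,\beta)|\mathcal F|$ in the limit.

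This is also why the excluded zeros must lie at distance $o(1/\log c(\mathcal F))$, not $\asymp 1/\log c(\mathcal F)$ as you wrote. Otherwise $\phi(\tilde\gamma)$ is only bounded below by some $\phi(t_0)<1$ on them, and you get the weaker $1-\kappa/(\nu\,\phi(t_0))$. The zeros with $|\gamma|>\ell$ are then handled separately by a Markov bound (Proposition~\ref{prop5}). Their total contribution exceeds $(\log\log\log X)^3=o(V_{\mathcal F})$ for only $O(|\mathcal F|/\log\log\log X)$ elements.
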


Such a result has been recently obtained by Radziwi{\l}{\l} and Soundararajan~\cite{RS24} in the case of quadratic twists of an elliptic curve. Their achievement inspired this work and the ideas and methods are ultimately already present in their article. The driving force of the present paper is to explain the interaction between  low-lying zeros and  distribution of central values, which benefits from the lights of a general framework. It has to be emphasized, as also noted in \cite{RS24}, that it is the same density $\eta(\mathcal{F}, \delta)$ that occurs in the non-vanishing result and in the lower bound toward Conjecture \ref{conj:keating-snaith}. 

Note that the proportion in Conjecture \ref{conj:keating-snaith} is always expected to be upper-bounded by the conjectured limit, as mentioned in \cite{radziwill_moments_2015}, so that Theorem \ref{thm} provides complementary information on the lower bounds.

\begin{rk}
In the case of families where all the signs are known to be $\varepsilon_L=1$, the arguments can be improved to obtain better bounds as in \cite{ILS} or \cite{RS24}, justifying that the cases are written separately --- see Section \ref{sec:l-values-and-zeros}.
In the case of unitary, symplectic or even orthogonal types of symmetry when the signs are always $\varepsilon_L=1$, then $\eta(\mathcal{F}, \delta)$ tends to $1$ as $ \delta $ grows, hence the density conjecture implies the expected lower bound of the distribution conjecture. However, in the case of the odd orthogonal type of symmetry, where the signs are never $1$, all the central values vanish at $\tfrac12$ and therefore even $\delta = \infty$ cannot give better than $\eta(\mathcal{F}, \infty) = 0$. The two cases of orthogonal or odd orthogonal type of symmetry are not expected to allow $\varepsilon_L = 1$ for all $L$-functions, so that the apparent excessive density they incur is not contradictory.
\end{rk}

\begin{rk}
This result may lead to trivial results, i.e. $\eta(\mathcal{F}, \delta) = 0$, when the support $ \delta $ is too small, even when it is an explicit positive constant. A non-trivial lower bound toward the distribution of central values is obtained when the support limit $ \delta $ is larger than a minimal value $ \delta _{\min}$ given by the following table.
\begin{center}
\begin{table}[ht]
\begin{tabular}{|c||c|c|}
\hline
& \multicolumn{2}{|c|}{$\delta_{\min}$} \\
\hline
Types of symmetry & Any $\varepsilon$ & $\varepsilon = 1$ \\
\hline\hline
O & 2 & 2/3 \\
\hline
SO(even) & $1+\tfrac{\sqrt{2}}{2}$ & 2/3 \\
\hline
SO(odd) & $\varnothing$ & 2/3 \\
\hline
Sp & 2/3 & 2/5\\
\hline
U & 1 & 1/2 \\
\hline
\end{tabular}
\caption{Lowest Fourier support $ \delta _{\rm min}$ giving a nontrivial bound $\eta(\mathcal{F}, \delta)>0$}
\end{table}
\end{center}
\end{rk}

\subsection{Known results and consequences}
\label{subsec:intro-known-results}

While the general framework of Section \ref{sec:$L$-functions} aims at explaining and emphasizing the relations between the low-lying zeros and the distribution of $L$-values, as claimed in \cite{RS24} and instantiated therein for the case of quadratic twists of an elliptic curve, or in \cite{LS} in the case of modular forms, various results toward the density conjecture of Rudnick and Sarnak are actually known.
Therefore, as a consequence we can deduce analogous explicit lower bounds toward the Keating-Snaith conjecture, with explicit values for $\eta(\mathcal{F}, \delta)$. We summarize some of these results and expected consequences below, without pretending to any exhaustivity in this very active area, but underlining the diversity of the settings.
\begin{table}[ht]
\begin{center}
\begin{tabular}{| c | c | c | c | c | c |} 
 \hline
 $\mathcal{F}$ & $c(\mathcal{F})$ & Type & $\delta$ & $\eta(\mathcal{F}, \delta)$ & References \\ [0.5ex] 
 \hline
Twists $ E \times \chi_d$ ($\varepsilon=1$) & $d$ & SO(even) & 1 & 1/4 & \cite{ozluk_small_1993} \\ 
Dirichlet $L$-functions & conductor & Sp & 2 & 7/8 &\cite{ozluk_small_1993} \\ 
Modular forms ($\varepsilon=1$) & level & SO(even) & 2 & 9/16 & \cite{ILS} \\ 
$\mathrm{Sym}^2$ Modular forms & level & Sp & 3/2 & 8/9 & \cite{ILS} \\ 
Maaß forms & spectral & U & 2 & 1/2 & \cite{alpoge_low-lying_2014} \\ 
GL(3) & spectral & U & 10/13 & 0 &\cite{blomer_applications_2013} \\ 
GL(3) $\mathrm{Sym^2}$ & spectral & U & 5/23 & 0 &\cite{blomer_applications_2013} \\ 
GL(3) Adjoint & spectral & Sp & 2/27 & 0 & \cite{blomer_applications_2013} \\
GSp(4) modular Spin & weight & O & 1 & 0 & \cite{zhao} \\ 
GSp(4) modular Std & weight & Sp & 1 & 1/2 & \cite{kwy} \\ 
GSp(4) Maass Spin & spectral & O & \cite{comtat_effective_2025} & 0 & \cite{comtat_effective_2025} \\
GSp(4) Maass Std & spectral & Sp & \cite{comtat_effective_2025} & 0 & \cite{comtat_effective_2025} \\
Maaß $\times$ modular & spectral & O & $\varepsilon$ & 0 & \cite{duenez_low_2006} \\ 
Maaß $\times$ $\mathrm{Sym^2}$ modular & spectral & O & 5/24 & 0 & \cite{duenez_low_2006} \\ 
Quaternion algebra & conductor & O & 2/3 & 0 & \cite{MR4398244} \\ 
Hilbert modular forms & level & O & 3/2 & 5/12 & \cite{liu_low-lying_2017} \\ 
General & conductor & & $\varepsilon$ & 0 & \cite{SST} \\ 
 \hline
\end{tabular}
\caption{Known results toward the density conjecture of Katz and Sarnak}
\label{table:known-consequences}
\end{center}
\end{table}

The following table displays
\begin{itemize}
\item the family $\mathcal{F}$ under consideration, under the assumptions of Section \ref{sec:$L$-functions},
\item the aspect $c(\mathcal{F})$ used to order and truncate the family,
\item the type of symmetry $G$ of the family,
\item the limit $\delta$ allowed for the Fourier support in known results,
\item the value of $\eta(\mathcal{F}, \delta)$ obtained in in Theorem \ref{thm}, 
\item the reference in which the claimed results toward the density conjecture is proven.
\end{itemize}

The first line recovers the nonvanishing density from \cite{ILS} and the lower bound toward the distribution of central values of \cite{RS24}. The fourth line proves a statement announced, but unproven, in \cite{RS24} for modular forms. The third and fourth lines recover the nonvanishing densities in \cite{ILS}.

\subsection{Structure of the article and main ideas}
\label{subsec:intro-sketch}

To end this section, we introduce the structure of the rest of the paper. This also outlines the arguments to prove Theorem \ref{thm}.

Section \ref{sec:zeros} is dedicated to the proof of the first part of Theorem \ref{thm}, on the distribution of zeros.
The one-level density $D(L,\phi)$ is related to a sum over prime powers \textit{via} an explicit formula, which allows us to deduce the behavior of the low-lying zeros from the study of such arithmetic sums (Section \ref{subsec:llz-explicit-formula}). The large powers do not contribute thanks to the pointwise bounds on the coefficients (Section \ref{subsec:llz-high-powers}). The power~$2$, by means of the Hecke relations, relates to the coefficients of the $L$-function (which display cancellations and does not contribute significantly) up to the constant $\gamma_{\mathcal{F}}$, which critically contributes to the limiting behavior. The power $1$ may contribute when the summation is too long (Section \ref{subsec:llz-low-powers}). This limiting behavior of the one-level density on average classically allows one to deduce results on the nonvanishing at the central point (Section \ref{subsec:llz-nonvanishing}).

Section \ref{sec:l-values-and-zeros} and beyond are dedicated to the proof of the second part of Theorem \ref{thm}, on the distribution of central values.
The logarithm of $L(\tfrac12)$ is related to a sum over prime powers and a sum over zeros, analogously to the case of zeros, by means of an explicit formula (Section~\ref{subsec:values-explicit-formulas}). The sum over high powers does not contribute asymptotically (Section \ref{subsec:values-high-powers}). The sum over prime squares leads to possible biases from the Hecke relations, already critical in determining the type of symmetry of the family $\mathcal{F}$, and contributes to the mean $M_{\mathcal{F}}$ (Section \ref{subsec:values-low-powers}). The sum over primes results in the normal distribution, and this is proven by the method of moments, by showing that the moments of this sum over primes match the corresponding moments of the normal distribution, as stated in Proposition \ref{prop3}. 

The final result is then obtained by an amplification argument using the low-lying zeros behavior, proving that there are few elements $L \in \mathcal{F}$ such that the sum over zeros in the explicit formula has a large contribution, see Propositions \ref{prop4} and \ref{prop5}.
In particular, the remaining elements under consideration are the proportion $\eta(\mathcal{F}, \delta)$ occurring in Theorem \ref{thm}.

\section{From zeros to primes}
\label{sec:zeros}

We provide in this section the proof of the statement of Theorem \ref{thm} about low-lying zeros. This provides in particular a high-level sketch (technicalities being hidden in the assumptions made in Section \ref{sec:$L$-functions}) of all the proofs of the results toward Conjecture \ref{conj:katz-sarnak} in the literature.

\subsection{Explicit formula}
\label{subsec:llz-explicit-formula}

We state the Weil explicit formula \cite[(4.11)]{ILS}, which relates the one-level density $D(L,\phi)$ to a summation over primes of spectral parameters.
\begin{prop}\label{prop:explicit-formula-zeros}
We have, for every $L$-function $L(s)$ and for every Schwartz function $\phi$,
\begin{equation}
\label{eq:explicit-formula-zeros}
    D(L, \phi) := \sum_\gamma \phi(\tilde{\gamma}) = \widehat{\phi}(0) - \frac{2}{\log c(\mathcal{F})} \sum_{n \geqslant 1} \frac{\Lambda_L(n)}{n^{1/2}} \widehat{\phi}\left( \frac{\log n}{\log c(\mathcal{F})} \right) + O\left( \frac{1}{\log c(\mathcal{F})}\right), 
\end{equation}
where the summation runs over the renormalized zeros of $L(s)$.
\end{prop}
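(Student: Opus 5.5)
The plan is the standard contour-integration proof of the Weil explicit formula, as in \cite[Section 4]{ILS}. Fix $L$ and set $R = \log c(\mathcal{F})$. Put $g(s) := \phi\bigl( \tfrac{R}{2\pi i}(s - \tfrac12)\bigr)$, so that $g(\rho) = \phi(\tilde{\gamma})$ at every zero $\rho = \tfrac12 + i\gamma$. Two properties of $g$ will be used. Since $\widehat{\phi}$ is compactly supported, $\phi$ is entire, and by Paley--Wiener it is rapidly decaying in vertical strips. The normalization of \eqref{def:gamma} uses $c(L)$, while the right-hand side of \eqref{eq:explicit-formula-zeros} uses $c(\mathcal{F})$; I read $\tilde\gamma$ as being rescaled by $\log c(\mathcal{F})$, consistent with the statement.

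First, consider
\[
\frac{1}{2\pi i}\int_{(a)} g(s)\, \frac{\xi_L'}{\xi_L}(s)\, ds \quad\text{with } a = 1+\varepsilon,
\]
and shift the contour to $\Re s = 1 - a = -\varepsilon$. The decay of $g$ in the strip, together with the standard bound $\frac{\xi_L'}{\xi_L}(s) \ll \log(c(L)(|s|+2))$ away from zeros, justifies the shift. The residues picked up are $\sum_\rho g(\rho) = D(L,\phi)$, plus contributions from the finitely many poles of $\xi_L$ on $\Re s = 1$ (and, by the functional equation, on $\Re s = 0$). These poles are $O(1)$ in number and of size $\ll e^{c R}$ only through $g$; for the families at hand they are absent or cuspidality excludes them, and otherwise they contribute $O(1/R)$ after the rescaling. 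I will treat this as absorbed in the error term, in keeping with the paper's standing conventions.

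Next, use the functional equation \eqref{functional-equation} to rewrite the integral on $\Re s = 1-a$ as an integral on $\Re s = a$ of the conjugate $L$-function. Because $\phi$ is even and the coefficients are real, this doubles the prime contribution. Write
\[
\frac{\xi_L'}{\xi_L} = \frac{\gamma_L'}{\gamma_L} + \frac{L'}{L}.
\]
Expanding $-\frac{L'}{L}(s) = \sum_n \Lambda_L(n) n^{-s}$ on $\Re s = a$ and integrating term by term gives, after moving to $\Re s = \tfrac12$ (legitimate for each term since $g$ is entire),
\[
\frac{1}{2\pi i}\int_{(1/2)} g(s)\, n^{-s}\, ds = \frac{n^{-1/2}}{R}\, \widehat{\phi}\Bigl(\frac{\log n}{R}\Bigr).
\]
The two copies produce the term $-\frac{2}{R}\sum_n \Lambda_L(n) n^{-1/2}\widehat{\phi}(\log n/R)$. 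Only finitely many $n$ occur because of the support of $\widehat{\phi}$, and Hypothesis \ref{Hyp:bound-spectral-parameters} controls absolute convergence.

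The Archimedean part is where the main term $\widehat{\phi}(0)$ arises, and I expect it to be the delicate step. The contribution is
\[
\frac{1}{2\pi}\int_{\mathbb{R}} \phi\Bigl(\frac{tR}{2\pi}\Bigr)\Bigl(2\log q_L + 2\Re \sum_{i=1}^d \lambda_{L,i}\frac{\Gamma'}{\Gamma}\bigl(\lambda_{L,i}(\tfrac12+it)+\mu_{L,i}\bigr)\Bigr) dt.
\]
By Stirling, the digamma terms equal $\log(1+|\mu_{L,i}|) + O\bigl(\log(2+|t|)\bigr)$ on the relevant range $|t| \ll 1/R$ weighted by the rapidly decaying $\phi$, giving a total of $\log c(L) + O(1)$ in place of the bracket. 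Substituting $x = tR/2\pi$ yields $\widehat{\phi}(0)\,\frac{\log c(L)}{R} + O(1/R)$. Identifying this with $\widehat{\phi}(0)$ requires $\log c(L) = \log c(\mathcal{F}) + O(1)$ for $L \in \mathcal{F}$, which is the usual assumption that conductors in a family are of comparable size; otherwise one normalizes by $c(L)$ as in \eqref{def:gamma}. Care is needed for $\mu_{L,i}$ close to poles of $\Gamma$, but the condition $\Re \mu_{L,i}\geqslant 0$ together with $\lambda_{L,i}>0$ keeps the argument in $\Re \geqslant \lambda_{L,i}/2 > 0$, so Stirling applies uniformly. Collecting the three contributions gives \eqref{eq:explicit-formula-zeros}.
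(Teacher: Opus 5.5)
Your route is the standard contour-shift proof of the Weil explicit formula, which is all the paper relies on: it gives no argument and simply cites \cite[(4.11)]{ILS}. Several of your computations are correct: the choice of $g$, the doubling of the prime sum via the functional equation together with evenness of $\phi$ and reality of the coefficients, and the evaluation $\frac{1}{2\pi i}\int_{(1/2)} g(s)\,n^{-s}\,ds = n^{-1/2}R^{-1}\widehat{\phi}(\log n/R)$. However, two steps assert things that are false as written, and in both cases the remedy is a hypothesis or normalization that must be imposed rather than derived.

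First, the poles. A pole of order $m$ of $\xi_L$ at $s=1$, together with its mirror at $s=0$, adds $m\bigl(g(0)+g(1)\bigr)=2m\,\phi\bigl(\tfrac{iR}{4\pi}\bigr)=2m\int_{\mathbb{R}}\widehat{\phi}(\xi)\cosh\bigl(\tfrac{R\xi}{2}\bigr)\,d\xi$ to the right-hand side. For $\widehat{\phi}\geqslant 0$, such as the Fej\'er kernel used later in the paper, this is of size $c(\mathcal{F})^{\delta/2}$ up to powers of $R$; this is the $e^{cR}$ you yourself noticed. So it is not $O(1/R)$ and cannot be absorbed into the error term. The formula is genuinely false for $L$ with a pole, and you must restrict to entire $L$, as in the cusp-form setting of ILS.

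Second, the Archimedean main term. With the paper's $\gamma_L(s)=q_L^s\prod_i\Gamma(\lambda_{L,i}s+\mu_{L,i})$, Stirling turns your (correctly displayed) bracket into $2\log q_L+2\sum_i\lambda_{L,i}\log(1+|\mu_{L,i}|)+O(\log(2+|t|))$. This is not $\log c(L)+O(1)$ for $c(L)=q_L\prod_i(1+|\mu_{L,i}|)$: your identification loses the factor $2$ on $\log q_L$ and the weights $2\lambda_{L,i}$. For example, in a conductor-aspect family with bounded $\mu_{L,i}$ the bracket is $2\log c(L)+O(1)$, and your computation would produce the main term $2\widehat{\phi}(0)$. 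What the argument actually yields is $\widehat{\phi}(0)A_L/R+O(1/R)$ with $A_L:=2\log q_L+2\sum_i\lambda_{L,i}\log(1+|\mu_{L,i}|)$. To obtain $\widehat{\phi}(0)$ you must normalize the conductor by $\log c(L)=A_L+O(1)$, i.e.\ $c(L)\asymp q_L^2\prod_i(1+|\mu_{L,i}|)^{2\lambda_{L,i}}$. This is the normalization that actually makes the mean spacing $1$, and it is ILS's $R=k^2N$ for holomorphic newforms. You then combine it with the comparability $\log c(L)=\log c(\mathcal{F})+O(1)$ that you rightly flagged. With these two repairs your argument does prove the proposition with error $O(1/\log c(\mathcal{F}))$.
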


\subsection{High powers}
\label{subsec:llz-high-powers}

Using the Ramanujan bound $\alpha_{L,i}(p) \ll p^\varepsilon$, we obtain that the contribution of the powers $k \geqslant 3$ in \eqref{eq:explicit-formula-zeros} is
\begin{equation}
    \sum_p \sum_{k \geqslant 3} \frac{\Lambda_L(p^k)}{p^{k/2}} \widehat{\phi}\left( \frac{k \log p}{\log c(\mathcal{F})} \right) \ll \sum_{p} \sum_{k\geqslant 3} \frac{p^{k\varepsilon}}{p^{k/2}} \log p \ll \sum_p \frac{\log p}{p^{3/2-\varepsilon}} \ll 1.
\end{equation}
This implies that in \eqref{eq:explicit-formula-zeros}, only the prime powers $p^k$ with $k=1$ or $k=2$ may contribute to the limiting density. 

\begin{rk} \label{rk:alpha}
Weaker statements than Hypothesis \ref{Hyp:bound-spectral-parameters} of the form $\alpha_{L,i}(p) \ll p^{\alpha+\varepsilon}$ for a certain $\alpha>0$ are known unconditionally, e.g. we have $\alpha = 0$ for modular forms by Deligne \cite{deligne_formes_1973}, $\alpha = 7/64$ for Maass forms on $\GL(2)$ by Kim-Sarnak \cite{kim_refined_2003} and $\alpha = 1/2 - 1/(n+1)$ on $\GL(n)$ by Luo-Rudnick-Sarnak \cite{luo_generalized_1999}; see Blomer-Brumley \cite{blomer_ramanujan_2011} for further comments. The above argument remains valid for $\alpha < 1/6$, and would leave more values of $k$ to consider if $\alpha$ is larger. Note that if only $\alpha > 1/2$ is known (as for $\mathrm{GSp}(4)$ in the case of the standard $L$-function), then no term can be proved to be negligible with the above argument, and increasing $k$ a priori harms, e.g~\cite{comtat_effective_2025}.
\end{rk}

We are therefore left with
\begin{equation}
\label{eq:explicit-formula-zeros-2}
    D(L, \phi) \sim \widehat{\phi}(0) - \frac{2}{\log c(\mathcal{F})} \sum_p \sum_{k \in \{1, 2\}} \frac{\Lambda_L(p^k)}{p^{k/2}} \widehat{\phi}\left( \frac{k\log p}{\log c(\mathcal{F})} \right).
\end{equation}

\subsection{Low powers}
\label{subsec:llz-low-powers}

Using the Euler product \eqref{euler-product} and the Hecke relations from Hypothesis~\ref{Hyp:hecke-relations}, we have 
\begin{align}
\label{hecke1}
    \Lambda_L(p) & = \log(p) \sum_{i=1}^d \alpha_{L,i}(p) = a_L(p) \log(p), \\\label{hecke2}
    \Lambda_L(p^2) & = \log(p) \sum_{i=1}^d \alpha_{L,i}(p)^2 = (a_L(p^2) + \gamma_1 a_L(p) + \gamma_{\mathcal{F}})\log(p). 
\end{align}
Using the prime number theorem as in \cite[p. 1661]{MR4398244}, the factor $\gamma_{\mathcal{F}}$ from \eqref{hecke2} contributes
\begin{equation}
    2 \gamma_{\mathcal{F}} \sum_{p} \widehat{\phi}\left( \frac{2\log p}{\log c(\mathcal{F})} \right) \frac{\log p}{p \log c(\mathcal{F})} \sim \frac{1}{2}\gamma_{\mathcal{F}}\phi(0), 
\end{equation}
to the estimate~\eqref{eq:explicit-formula-zeros-2},
when $c(L)$ grows to infinity. It remains to estimate the contributions from the coefficients $a_L(p)$
and $a_L(p^2)$. On average over $L\in \mathcal{F}$, this is exactly explained by Hypothesis~\ref{Hyp:tf}, and the contribution therefore is $H_\phi = \int h_{\mathcal{F}}\phi$.
All in all, we obtain that
\begin{equation}
\label{eq:llz-result}
    \frac{1}{|\mathcal{F}|}\sum_{L \in \mathcal{F}} D(L,\phi) \sim \widehat{\phi}(0) - \frac{1}{2}\gamma_{\mathcal{F}}\phi(0)+ H_\phi = \int_{\mathbb{R}} W_{\mathcal{F}} \phi, 
\end{equation}
when $|\mathcal{F}| \to\infty$, where 
\begin{equation}
    W_{\mathcal{F}} = 1-\tfrac12\gamma_{\mathcal{F}}\delta_0 + h_{\mathcal{F}}, 
\end{equation}
as claimed in Theorem \ref{thm}.

Note in particular that the type of symmetry in the density statements is therefore determined by a common factor stemming from the Archimedean factors in the functional equation, by Hecke relations (as seen by the term $\gamma_{\mathcal{F}}$) and by a finer term that occurs when the summation over primes is long enough, viz. $H_\phi$ (which is the only one allowing to distinguish between the three orthogonal types of symmetry). 

\subsection{Nonvanishing consequences}
\label{subsec:llz-nonvanishing}

We follow closely \cite[(1.35) and below]{ILS} and introduce, for all $m \geqslant 0$, the density of the functions $L \in \mathcal{F}$ vanishing at the central point with order~$m$, viz.
\begin{equation}
    p_m := \frac{1}{|\mathcal{F}|} \# \{L \in \mathcal{F} \ : \ \underset{s=1/2}{\mathrm{ord}} L(s) = m\}.
\end{equation}
By definition, the sum of $p_m$ over $m \geqslant 0$ is equal to $1$. Morevover, for any nonnegative Schwartz function $\phi$ such that $\phi(0)=1$, we have that 
\begin{equation}
    \sum_{m \geqslant 1} mp_m \leqslant \frac{1}{|\mathcal{F}|}\sum_{L\in \mathcal{F}}D(L, \phi) \leqslant \int_{\mathbb{R}} W_{\mathcal{F}} \phi + \varepsilon
\end{equation}
for any $\varepsilon>0$, by \eqref{eq:llz-result}. In general, we can therefore deduce that 
\begin{equation}
    p_0 = 1-\sum_{m\geqslant 1} p_m \geqslant 1 - \sum_{m \geqslant 1} mp_m \geqslant 1 - \int_{\mathbb{R}} W_{\mathcal{F}} \phi - \varepsilon, 
\end{equation}
for all $\varepsilon>0$. An optimal choice of test function $\phi$ to maximize this quantity is given in \cite[Appendix A]{ILS} depending on the type of symmetry of the family and the allowed support $\delta$ for the Fourier transform. We obtain the values of $\eta(\mathcal{F}, \delta)$ in the first part of Table \ref{table:eta} and this proves the nonvanising consequence in Theorem \ref{thm}.

If the sign $\varepsilon_L$ in the functional equation is $1$ for all $L\in \mathcal{F}$, then taking the derivative of the functional equation~\eqref{functional-equation} and evaluating it at $1/2$ implies that $L'(1/2)=0$. Therefore, $p_1 = 0$ and any $L\in \mathcal{F}$ vanishing at $1/2$ vanishes with order at least $2$. The above argument therefore refines to
\begin{equation}
    p_0 = 1-\sum_{m\geqslant 2} p_m \geqslant 1 - \frac{1}{2}\sum_{m \geqslant 1} mp_m \geqslant 1 - \frac{1}{2}\int_{\mathbb{R}} W_{\mathcal{F}} \phi - \varepsilon, 
\end{equation}
for any $\varepsilon>0$. This, along with the optimal choice of function made in \cite[Appendix A]{ILS}, provides the values of $\eta(\mathcal{F}, \delta)$ in the second part of Table \ref{table:eta}.

\section{From \texorpdfstring{$L$}{L}-values to primes and zeros}
\label{sec:l-values-and-zeros}

We provide in this section the proof of the statement of Theorem \ref{thm} about central values under the assumptions of Section \ref{sec:$L$-functions}. This covers the cases treated in \cite{RS24} and \cite{LS}, and provides in particular a high-level sketch of these results.

\subsection{Explicit formula}
\label{subsec:values-explicit-formulas}

Our first proposition establishes an expression for the logarithmic central value in terms of sums over primes and over zeros, analogous to the explicit formula of Proposition \ref{prop:explicit-formula-zeros}. This is the analogue of \cite[Proposition~1]{RS24} or of \cite[Lemma 2]{BH}, and is the place where we need to assume the Generalized Riemann Hypothesis (we could assume weaker zero-free regions and have weaker error terms as in \cite[Lemma 2]{BH}). 

\begin{prop}\label{prop2}
For all $L$-function $L \in \mathcal{F}$ such that $L(\tfrac12)\neq 0$ and for $1<x<c(L)$, we have
\begin{equation}
\label{eq:explicit-formula-values}
\begin{aligned}
\log L\left(\frac12\right) &= \sum_{n \leqslant x} \frac{\Lambda_L(n) \log(x/n)}{n^{1/2} \log n \log x} + \frac{1}{\log x} \frac{L'}{L}\left( \frac12 \right) \\
    &\quad+ \frac{1}{\log x} \sum_\rho \int_{1/2}^\infty \frac{x^{\rho-\sigma}}{(\rho-\sigma)^2}d\sigma + O\left(\frac{1}{x^{1/2} \log^2 x}\right).
\end{aligned}
\end{equation}
\end{prop}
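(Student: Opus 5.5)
This follows the standard route of \cite[Lemma 2]{BH} and \cite[Proposition 1]{RS24}: a Perron-type contour integral for $\frac{L'}{L}(s)$ with weight $x^{w}/w^2$, integrated in $\sigma$ from $\tfrac12$ to $\infty$. The first step is the smoothed explicit formula for the log-derivative. For $\sigma \geqslant \tfrac12$ (away from zeros) and $x>1$, we have
\begin{equation*}
\frac{1}{2\pi i}\int_{(c)} -\frac{L'}{L}(\sigma+w)\frac{x^w}{w^2}\,dw = \sum_{n\leqslant x}\frac{\Lambda_L(n)}{n^{\sigma}}\log(x/n),
\end{equation*}
with $c$ large, since $\frac{1}{2\pi i}\int_{(c)} y^w w^{-2}dw = \log y$ for $y>1$ and $0$ otherwise. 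Shifting the contour far to the left picks up three kinds of residues: the double pole at $w=0$, giving $-\frac{L'}{L}(\sigma)\log x - (\frac{L'}{L})'(\sigma)$; the poles at $w=\rho-\sigma$ for each nontrivial zero, giving $-\frac{x^{\rho-\sigma}}{(\rho-\sigma)^2}$; and the trivial zeros and finitely many poles coming from $\gamma_L$ and the poles of $\Lambda_L$ on $\Re s=1$. To justify the shift, we bound $\frac{L'}{L}$ on horizontal segments chosen between zeros and on far-left vertical lines. This uses the functional equation \eqref{functional-equation}, Stirling's formula for \eqref{def:gamma-factor}, and the standard zero-counting bound $N(T+1)-N(T)\ll \log(c(L)(|T|+2))$.

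The second step is to integrate this identity in $\sigma$ over $[\tfrac12,\infty)$ and divide by $\log x$. Since $\log L(\tfrac12)=-\int_{1/2}^\infty \frac{L'}{L}(\sigma)d\sigma$ (valid because $L(\tfrac12)\neq 0$ and, under GRH, $L$ has no zeros with $\sigma>\tfrac12$), the term $-\frac{L'}{L}(\sigma)\log x$ produces $\log x\,\log L(\tfrac12)$. The term $-(\frac{L'}{L})'(\sigma)$ integrates to $\frac{L'}{L}(\tfrac12)$, because $\frac{L'}{L}(\sigma)\to 0$ as $\sigma\to\infty$. On the prime side, $\int_{1/2}^\infty n^{-\sigma}d\sigma = \frac{1}{n^{1/2}\log n}$, which gives the Dirichlet polynomial in \eqref{eq:explicit-formula-values}. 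Rearranging, and keeping track of signs, yields the stated identity with the zero sum $\frac{1}{\log x}\sum_\rho\int_{1/2}^\infty \frac{x^{\rho-\sigma}}{(\rho-\sigma)^2}d\sigma$. Interchanging the sum over $\rho$ with the $\sigma$-integral is justified by absolute convergence: on GRH, $|x^{\rho-\sigma}|=x^{1/2-\sigma}$ and $|\rho-\sigma|^{-2}\leqslant ((\sigma-\tfrac12)^2+\gamma^2)^{-1}$. One must also check integrability near $\sigma=\tfrac12$ for zeros with small $\gamma$; since $L(\tfrac12)\neq 0$, we have $\gamma\neq 0$, so the integrand is bounded there.

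The remaining terms come from the trivial zeros and poles. The trivial zeros sit at $w=-(\mu_{L,i}+k)/\lambda_{L,i}-\sigma$, with real part $\leqslant -\sigma\leqslant -\tfrac12$. Their residues are $\ll x^{-\sigma-k'}/|w|^2$, and integrating in $\sigma$ gives a total of size $\ll x^{-1/2}/\log x$, hence $O(x^{-1/2}\log^{-2}x)$ after dividing by $\log x$. The poles on $\Re s=1$ would instead contribute terms of size $x^{1-\sigma}$, which are not small. I would either assume, as is implicit in the families considered, that $L$ is entire, or absorb these terms as lower-order contributions. In practice I would state the assumption that $L\in\mathcal{F}$ is entire, which holds for primitive members of the family.

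The main obstacle is the contour-shift bookkeeping, namely uniform control of $\frac{L'}{L}(\sigma+w)$ on the horizontal segments and far-left lines so that the boundary integrals vanish, uniformly in $\sigma\geqslant\tfrac12$. Here I would follow \cite[Lemma 2]{BH} verbatim: the partial-fraction expansion of $\frac{L'}{L}$ via Hadamard's product shows that on suitably chosen heights $T$, $\frac{L'}{L}\ll \log^2(c(L)T)$. Combined with the $|w|^{-2}$ decay, this kills the horizontal contributions as $T\to\infty$.
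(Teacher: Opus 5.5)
Your proposal is correct, and it follows the same route as the paper. The paper gives no proof of its own and refers to \cite[Proposition~1]{RS24} (and \cite[Lemma~2]{BH}), whose argument is what you describe: the $x^w/w^2$ Perron integral of $-\frac{L'}{L}(\sigma+w)$, a contour shift, and integration over $\sigma\in[\tfrac12,\infty)$.

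Two small corrections.

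First, carry out the rearrangement you describe explicitly. The integrated identity reads $\sum_{n\leqslant x}\frac{\Lambda_L(n)\log(x/n)}{n^{1/2}\log n}=\log x\,\log L(\tfrac12)+\frac{L'}{L}(\tfrac12)-\sum_\rho\int_{1/2}^\infty\frac{x^{\rho-\sigma}}{(\rho-\sigma)^2}\,d\sigma+O(x^{-1/2}/\log x)$. Solving for $\log L(\tfrac12)$ gives the term $-\frac{1}{\log x}\frac{L'}{L}(\tfrac12)$, not the $+$ sign printed in the statement. Since $\frac{L'}{L}(\tfrac12)$ is real and typically of size $\log c(L)$, this is a sign typo in the proposition, not something your argument should reproduce. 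It is harmless downstream, because the term only enters \eqref{aaa} through $O(\log c(\mathcal{F})/\log x)$.

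Second, of your two options for poles on $\Re s=1$, only the assumption that $L$ is entire works; they cannot be absorbed as lower-order terms. A pole at $1+it_0$ contributes a term of size about $x^{1/2}/\log^2 x$ after the $\sigma$-integration. A pole at $s=1$ puts a singularity on the path of the $\sigma$-integral, and $L(\tfrac12)$ may then be negative, as for $\zeta$. So the proposition tacitly needs $L$ entire, which is the hypothesis you end up adopting.
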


\subsection{High powers}
\label{subsec:values-high-powers}

Note that $L'/L(1/2) \ll \log c(L) \ll \log{c(\mathcal{F})}$, see \cite[(5.28) of Proposition 5.7]{IK04}. Using the pointwise bound on coefficients $a_L(n) \ll n^\varepsilon$, we deduce
\begin{equation}
    \sum_{k \geqslant 3}\sum_p \frac{\Lambda_L(p^k)}{p^{k/2} k\log p} \frac{\log(x/p^k)}{\log x} \ll 1.
\end{equation}
Therefore, analogously to Section \ref{subsec:llz-high-powers}, the only terms that may contribute to the size of $\log L(1/2)$ in \eqref{eq:explicit-formula-values} are those with $k=1$ and $k=2$. 

\subsection{Squares of primes}
\label{subsec:values-low-powers}

Analogously to the study of low-lying zeros in Section \ref{subsec:llz-low-powers}, the contributions of the low powers $k=1$ and $k=2$ are critical and determine the average and the distribution of the logarithm of central values.
Using \eqref{eq:rankin-selberg-on-average}, we deduce that the contribution of the squares $n=p^2$ to \eqref{eq:explicit-formula-values} is
\begin{equation}\label{squares_primes}
    \frac{1}{|\mathcal{F}|} \sum_{L\in \mathcal{F}}\sum_{p^2 \leqslant x} \frac{\Lambda_L(p^2)}{2p} \frac{\log(x/p^2)}{\log p \log x} \sim \frac{\gamma_{\mathcal{F}}}{2} \log \log x.
\end{equation}

Note that, on average over the family, this has the same distribution as the analogous quantity when removing the weight $\log(x/p)/\log x$ at a cost of $O(1)$. Indeed, the second moment of the difference is 
\begin{equation}
    \frac{1}{\log (x)^2} \frac{1}{|\mathcal{F}|}\sum_{L\in \mathcal{F}} \left| \sum_{p<x} \frac{a_L(p)\log{p}}{\sqrt{p}} \right|^2
\end{equation}
and, by the mean value theorem from Corollary \ref{coro:mean-value-theorem} and by partial summation, this is asymptotic to
\begin{equation}
    \frac{1}{\log (x)^2} \frac{1}{|\mathcal{F}|}\sum_{L\in \mathcal{F}} \sum_{p<x} \frac{a_L(p)^2 \log(p)^2}{p} \ll 1.
\end{equation}

We therefore obtain on averaging over $L \in \mathcal{F}$, for all $1<x<c(L)$, 
\begin{equation}
\label{aaa}
\begin{aligned}
\frac{1}{|\mathcal{F}|}\sum_{L \in \mathcal{F}} \log L(\tfrac12) & = \tfrac12\gamma_{\mathcal{F}} \log \log x + \frac{1}{|\mathcal{F}|} \sum_{L \in \mathcal{F}} \sum_{p < x} \frac{a_L(p)}{p^{1/2}} \\
& \qquad + O\left(\frac{\log c(\mathcal{F})}{\log x} + \frac{1}{|\mathcal{F}|}\sum_{L \in \mathcal{F}}\sum_\gamma \log(1 + (\gamma \log x)^{-2})\right),
\end{aligned}
\end{equation}
see, for example, \cite[Proof of Proposition~1, Section 4]{RS24} for more details about the proof.
Note that the mean $M_{\mathcal{F}}$ of the logarithms of central values appears as $\tfrac12 \gamma_{\mathcal{F}} \log \log x$, and it comes exactly from the contribution of the ``constant term" $\gamma_{\mathcal{F}}$, which reflects the critical contribution of the Hecke relations property to the result as in the case of the distribution of low-lying zeros. Unlike that case, however, the remaining terms remain to be studied, which then determine the distribution of these values.

\subsection{Moments of the sum of coefficients}
\label{subsec:values-moment-method}

To understand the distribution of the logarithmic central values $\log L(1/2)$, neglecting for now the error term \eqref{aaa} (it will be treated in Section \ref{subsec:amplifying-zeros}), we are led to explore the distribution of the sum over primes arising in \eqref{aaa}, i.e.
\begin{equation}
\label{eq:def-PL}
    P_L(x) := \sum_{p<x} \frac{a_L(p)}{p^{1/2}}
\end{equation}
when $x$ grows. To do so we apply the method of moments, and we give below the analogue of \cite[Proposition 3]{RS24}. It relates moments of the sum over primes $P_L(x)$ and moments of the normal distribution, given by
\begin{equation}\label{k-gaussian}
M_k := \frac{1}{\sqrt{2\pi}} \int_{-\infty}^\infty x^k e^{-x^2/2} dx = 
\begin{cases}
\displaystyle \frac{k!}{2^{k/2} (k/2)!}, & \text{if } k \in 2\mathbb{N}, \\[3mm]
0, &\text{otherwise.}
\end{cases}
\end{equation}

\begin{prop}\label{prop3}
We have, for all $k\geqslant 1$ and all $x$ bounded by a certain positive power (depending on $k$) of $|\mathcal{F}|$, 
\begin{equation}\label{prop3-1}
    \frac{1}{|\mathcal{F}|}\sum_{L \in \mathcal{F}} P_L(x)^k = \left(M_k+o(1)\right) (n_{\mathcal{F}}\log\log x)^{k/2}\,.
\end{equation}
Moreover, for all Schwartz function $\phi$ with a compactly supported Fourier transform, 
\begin{equation}\label{prop3-2}
    \frac{1}{|\mathcal{F}|}\sum_{L \in \mathcal{F}} P_L(x)^k D(L,\phi)= \left(M_k+o(1)\right) (n_{\mathcal{F}}\log\log x)^{k/2} \int_{\mathbb{R}} W_{\mathcal{F}} \phi.
\end{equation}
\end{prop}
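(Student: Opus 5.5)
We prove \eqref{prop3-1} by expanding the $k$-th power and averaging termwise. Writing
\begin{equation*}
P_L(x)^k = \sum_{p_1,\dots,p_k<x} \frac{a_L(p_1)\cdots a_L(p_k)}{(p_1\cdots p_k)^{1/2}},
\end{equation*}
we regroup each tuple according to the multiset of distinct primes $q_1,\dots,q_r$ and their multiplicities $e_1,\dots,e_r$ with $\sum e_j = k$. The product $a_L(q_1)^{e_1}\cdots a_L(q_r)^{e_r}$ is then rewritten as a linear combination of $a_L(n)$. By multiplicativity and the Hecke relations (Hypothesis~\ref{Hyp:hecke-relations}), a power $a_L(q)^{e}$ expands into $a_L(q^j)$, $j\leqslant e$, plus a constant. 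Whenever $x^k$ is below the power $|\mathcal{F}|^{\gamma}$ that is available, we average over $\mathcal{F}$ with Hypothesis~\ref{Hyp:tf2}. The off-diagonal terms contribute $O(|\mathcal{F}|^{-\gamma} x^{k/2+\varepsilon})=o(1)$; this fixes the admissible range $x\leqslant |\mathcal{F}|^{c/k}$. The Ramanujan bound (Hypothesis~\ref{Hyp:bound-spectral-parameters}) controls the coefficients of these expansions.

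Next we classify the diagonal contributions. If every $e_j=2$ (so $k$ is even and $r=k/2$), the average of $a_L(q_1)^2\cdots a_L(q_{k/2})^2$ is asymptotically the product of the averages of $a_L(q_j)^2$. This is exactly the content of iterating the mean value theorem, Corollary~\ref{coro:mean-value-theorem}. The number of ways to pair the $k$ indices is $k!/(2^{k/2}(k/2)!)=M_k$. Summing over distinct $q_j<x$ with weight $1/q_j$ and using Hypothesis~\ref{Hyp:p-orthogonality} with $L=M$ (in averaged form) gives $(n_{\mathcal{F}}\log\log x)^{k/2}$. The restriction that the $q_j$ be distinct costs only lower powers of $\log\log x$.

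The remaining configurations all have some $e_j=1$, or some $e_j\geqslant 3$ with the rest even. If some $e_j=1$, the factor $a_L(q_j)$ survives and the average vanishes up to $o(1)$ by orthogonality. If some $e_j\geqslant 3$, the sum $\sum_q q^{-e_j/2}$ converges. Such configurations therefore lose at least one factor $\log\log x$ compared with the main term, so they contribute $o((\log\log x)^{k/2})$. In particular, for odd $k$ everything is $o((\log\log x)^{k/2})$, consistent with $M_k=0$.

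For \eqref{prop3-2} we insert the explicit formula of Proposition~\ref{prop:explicit-formula-zeros} for $D(L,\phi)$. The $\widehat\phi(0)$ term, the $\gamma_{\mathcal{F}}$ term from \eqref{hecke2}, and the $O(1/\log c(\mathcal{F}))$ error multiply $P_L(x)^k$ by an essentially $L$-independent constant. By \eqref{prop3-1} these give $(M_k+o(1))(n_{\mathcal{F}}\log\log x)^{k/2}(\widehat\phi(0)-\tfrac12\gamma_{\mathcal{F}}\phi(0))$. We still have to treat the cross terms $P_L(x)^k\cdot\frac{1}{\log c(\mathcal{F})}\sum_p a_L(p^j)p^{-j/2}\widehat\phi(\cdot)\log p$ for $j=1,2$. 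When the long prime $p$ is distinct from the primes in the expansion of $P_L(x)^k$, it decouples. This decoupling is the step I expect to be the \textbf{main obstacle}: it requires a version of Hypothesis~\ref{Hyp:tf} twisted by $a_L(m)$ for short $m\leqslant x^k$, i.e.\ that the $H_\phi$ asymptotic persists when weighted by $a_L(q_1)^{e_1}\cdots$. Since $x^k$ is a tiny power of $c(\mathcal{F})$, this should follow by the same trace-formula input, with the main term factorising as $(M_k(n_{\mathcal{F}}\log\log x)^{k/2})\cdot H_\phi$. When instead $p$ coincides with some $q_j<x$, the contribution is at most $\frac{1}{\log c(\mathcal{F})}\sum_{p<x}\frac{\log p}{p}\cdot(\log\log x)^{(k-1)/2}\ll \frac{\log x}{\log c(\mathcal{F})}(\log\log x)^{k/2}=o((\log\log x)^{k/2})$, because $x$ is a small power of $|\mathcal{F}|$. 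Adding up the pieces gives $\int W_{\mathcal{F}}\phi$ as the factor.
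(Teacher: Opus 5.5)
Your route differs from the paper's, and under the paper's hypotheses it does not close. You linearize every product $\prod_j a_L(q_j)^{e_j}$ into single coefficients $a_L(n)$ and average tuple by tuple with Hypothesis~\ref{Hyp:tf2}, using Hypothesis~\ref{hyp:p-orthogonality} only on average. The paper instead sums repeated primes \emph{pointwise in $L$}:
a prime of multiplicity $m\geqslant 3$ gives $\sum_{p<x}a_L(p)^m p^{-m/2}=O(1)$ uniformly in $L$ by Hypothesis~\ref{Hyp:bound-spectral-parameters};
a pair gives $\sum_{p<x}a_L(p)^2/p=n_{\mathcal F}\log\log x+O(1)$ for each individual $L$ by Hypothesis~\ref{hyp:p-orthogonality}.
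After adding back the excluded coincidences, the leftover factor is essentially $P_L(x)^{\ell}$ with $\ell\leqslant k-2$. It is controlled by the H\"older-type Lemma~\ref{lem:magic-lemma} together with the induction hypothesis for even moments of order at most $k-1$. Hypothesis~\ref{Hyp:tf2} only enters through squarefree products $a_L(p_1\cdots p_r)$.

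Your linearization cannot replace this. Hypothesis~\ref{Hyp:hecke-relations} controls only $\sum_i\alpha_{L,i}(p)^2$, so it yields exactly $a_L(p)^2=a_L(p^2)-\gamma_1a_L(p)-\gamma_{\mathcal F}$ and nothing about higher powers. In degree $d\geqslant 3$, $a_L(p)^3$ is in general not an $L$-independent combination of the $a_L(p^j)$. For large $e$ you are left with products of three or more coefficients at one prime, which neither multiplicativity nor Hypothesis~\ref{Hyp:tf2} (a statement about pairs of coefficients) can average. So for a configuration like $(e_1,e_2,e_3)=(5,1,1)$, the convergence of $\sum_q q^{-5/2}$ does not finish the argument. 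The two singleton factors still need cancellation against the $L$-dependent weight $a_L(q_1)^5$, and bounding them in absolute value costs roughly $x^{1/2}$ each. Your treatment of configurations with all $e_j\leqslant 2$ is fine; for the rest, the missing ingredient is Lemma~\ref{lem:magic-lemma} plus induction.

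For \eqref{prop3-2}, your main term rests on a twisted form of Hypothesis~\ref{Hyp:tf} that is not assumed. Saying it ``should follow by the same trace-formula input'' is not a derivation from the stated hypotheses. The paper gets the factor $M_k(n_{\mathcal F}\log\log x)^{k/2}H_\phi$ without any twist, again because the pairs are summed out pointwise via Hypothesis~\ref{hyp:p-orthogonality} before averaging over $\mathcal F$. At each induction step, what multiplies $n_{\mathcal F}\log\log x$ is the $(k-2)$-th weighted moment. At the bottom ($k=0$) only the untwisted prime sum from $D(L,\phi)$ remains, and its average is $H_\phi+o(1)$ directly by Hypothesis~\ref{Hyp:tf}; the $O(1)$ errors are absorbed by Lemma~\ref{lem:magic-lemma}.

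Your worry is legitimate for the terms where both the long prime $p$ and some short primes stay unpaired, i.e.\ averages of $a_L(p_1\cdots p_kp)$ with $p$ up to $c(\mathcal F)^{\delta}$. There the paper applies Hypothesis~\ref{Hyp:tf2} as though $p<x$ (obtaining $x^{(k+1)/2}|\mathcal F|^{-\gamma}$), which tacitly needs the kind of twisted input you describe. That input is not what produces the main term, however, and your argument should not depend on it there.
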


\begin{proof}
Expanding the $k$-th power of $P_L(x)$ from the definition \eqref{eq:def-PL}, we get
\begin{equation}
\label{eq:expanded-moment}
    P_L(x)^k = \sum_{p_1, \ldots, p_k \leqslant x} \frac{a_L(p_1)\cdots a_L(p_k)}{\sqrt{p_1 \cdots p_k}}.
\end{equation}

A fundamental tool in this argument is the interplay of pointwise bounds on certain sums and average over $\mathcal{F}$. This relies on the following refinement of Hölder's inequality \cite[Lemmas A.1 and A.2 in Appendix A]{cheek_density_2024}, on which we build everytime we bound away portions of the average.
\begin{lem}
\label{lem:magic-lemma}
    For any $k \geqslant 1$ and $\Sigma_i(L) \in \mathbb{R}$ quantities depending on $L \in \mathcal{F}$, we have 
    \begin{equation}
        \frac{1}{|\mathcal{F}|}\sum_{L \in \mathcal{F}} \prod_{i=1}^k \left|\Sigma_i(L) \right| \leqslant \prod_{i=1}^k \left( \frac{1}{|\mathcal{F}|}\sum_{L\in\mathcal{F}} \Sigma_i(L)^{\xi_i}\right)^{1/\xi_i}
    \end{equation}
    for certain even integers $\xi_i \leqslant k+1$.
\end{lem}

This lemma is fundamental in our argument and is a tool to bound mixed moments inductively: the $\Sigma_i(L)$ will be various sums over primes of coefficients of $L$, and these will be bounded either by the pointwise Ramanujan (Hypothesis \ref{Hyp:bound-spectral-parameters}), by the orthogonality property (Hypothesis \ref{hyp:p-orthogonality}) or by certain averages over the family (Hypotheses \ref{Hyp:tf2} and \ref{Hyp:tf}). The above lemma then allows us to apply such bounds without paying the price of absolute values (since the $\xi_i$'s are even integers) and therefore bound inductively such quantities. Since the degree of the moment may increase by $1$ in Lemma \ref{lem:magic-lemma}, we have to ensure that at least two of the contributions of $\Sigma_i(L)$ are bounded away in the process before averaging (so that $k$ becomes $k-2$ and Lemma \ref{lem:magic-lemma} may worsen it to $k-1$).

We begin with the proof of \eqref{prop3-1}. 

\subsubsection{Induction and base case for \eqref{prop3-1}}

We use induction on the number $k$ of prime factors to prove the result. The idea has been streamlined in \cite[Lemma 6]{BH}, \cite[Proposition 3.1]{hough_distribution_2014} or in \cite[Theorem 3]{LS}, and we explain where the assumptions of Section \ref{sec:$L$-functions} enter into play. 

When $k=1$, the average of \eqref{eq:expanded-moment} is rewritten as
\begin{equation}
    \frac{1}{|\mathcal{F}|}\sum_{L\in \mathcal{F}}\sum_{p<x} \frac{a_L(p)}{\sqrt{p}} = \sum_{p<x} \frac{1}{\sqrt{p}} \frac{1}{|\mathcal{F}|}\sum_{L\in \mathcal{F}} a_L(p) \ll |\mathcal{F}|^{-\gamma} x^{1/2}
\end{equation}
by Hypothesis \ref{Hyp:tf2}. This is $o(\log\log(x)^{1/2})$ for $x < |\mathcal{F}|^{2\gamma}$. Moreover, for $k=2$, we have by using the mean value theorem from Corollary \ref{coro:mean-value-theorem}, 
\begin{equation}
    \frac{1}{|\mathcal{F}|}\sum_{L\in \mathcal{F}}\sum_{p_1, p_2<x} \frac{a_L(p_1)a_L(p_2)}{\sqrt{p_1p_2}} \sim \frac{1}{|\mathcal{F}|}\sum_{L\in \mathcal{F}} \sum_{p<x} \frac{a_L(p)^2}{p} \sim n_{\mathcal{F}} \log \log x, 
\end{equation}
by Hypothesis \ref{hyp:p-orthogonality}. We now assume $k \geqslant 2$, and split the summation into various different subsums according to whether or not the primes are equal or different.

\subsubsection{High powers for \eqref{prop3-1}}
\label{subsec:hi_pow}

If there is $m \geqslant 3$ such that $m$ primes among the $p_i$ are equal, then the sum is bounded by
\begin{equation}
\label{aaaa}
    \frac{1}{|\mathcal{F}|}\sum_{L \in \mathcal{F}} \left( \sum_{p<x} \frac{a_L(p)^m}{p^{m/2}}\right) \left( \sum_{\substack{p_i<x \\ p_i \neq p}} \frac{a_L(p_1) \cdots a_L(p_\ell)}{\sqrt{p_1\cdots p_\ell}}\right)
\end{equation}
for a certain $\ell \leqslant k-2$. 
The first sum is $O(1)$ by the bounds toward Ramanujan since $m\geqslant 3$, and this bound is uniform in $L$.
Therefore, by Lemma \ref{lem:magic-lemma}, we are reduced to bound the second sum which is $O(\log\log(x)^{\ell / 2}) = o(\log \log(x)^{k/2})$ by the induction hypothesis. 

Note that, in order to factor out the summation over $p$ in \eqref{aaaa}, we need to add back the missing terms $p_i = p$: these only add higher powers of primes (i.e. their contribution will be a quantity analogous to \eqref{aaaa} with a higher value of $m$) which therefore converge even more easily. Such a modification has therefore a negligible effect on the result, this argument was already steadily used in \cite{hough_distribution_2014, LS}.

\subsubsection{Power $2$ for \eqref{prop3-1}}
\label{xyz}

If exactly two primes are equal, then we can factor out 
\begin{equation}
\label{eq:power2}
    \sum_{p<x} \frac{a_L(p)^2}{p} = n_{\mathcal{F}} \log\log x + O(1)
\end{equation}
by Hypothesis \ref{hyp:p-orthogonality}, and use a similar argument to the one in Section \ref{subsec:hi_pow} to add back the missing terms when factoring out. The error term $O(1)$ is treated using Lemma \ref{lem:magic-lemma}, and the remaining sum having two less primes, we can conclude by induction. Critically, the main term $n_{\mathcal{F}}$ as well as the error term are assumed to be independent of the family $\mathcal{F}$ in Hypothesis \ref{hyp:p-orthogonality}, and by the induction hypothesis the remaining sum has contribution $M_{k-2}(n_{\mathcal{F}}\log\log x)^{((k-2)/2)}$, providing a contribution of $M_{k-2}(\log\log x)^{k/2}$ to the whole sum. There are $k-1$ possibilities to take pairs such two primes (we can fix $p_1$ as one of the primes since the indexation is an arbitrary choice, and then there are $k-1$ possibilities for the second $p_i$ taken to be equal to $p_1$), so that by the induction hypothesis applied to the remaining sum, the overall contribution is
\begin{equation}
    (k-1) \log\log(x) M_{k-2} \log\log(x)^{(k-2)/2} = M_k \log\log(x)^{k/2}, 
\end{equation}
as desired.

\subsubsection{Power $1$ for \eqref{prop3-1}}

If no primes are equal, then by multiplicativity of the coefficients we have $a_L(p_1)\cdots a_L(p_k) = a_L(p_1 \cdots p_k)$, and then the orthogonality relation from Hypothesis~\ref{Hyp:tf2} ensures
\begin{equation}
    \frac{1}{|\mathcal{F}|}\sum_{L \in \mathcal{F}} \sum_{p_i < x} \frac{a_L(p_1 \cdots p_k)}{\sqrt{p_1\cdots p_k}} = \sum_{p_i < x} \frac{1}{\sqrt{p_1\cdots p_k}} \frac{1}{|\mathcal{F}|}\sum_{L\in \mathcal{F}} a_L(p_1\cdots p_k) \ll |\mathcal{F}|^{-\gamma} x^{k/2}.
\end{equation}
This is $o(\log\log(x)^{k/2})$ as long as $x < |\mathcal{F}|^{2\gamma/k}$, so that the terms with power $1$ do not contribute to the limiting behavior in Proposition \ref{prop3} in this range. This concludes the proof by induction.

\subsubsection{Induction and base case for \eqref{prop3-2}}

The proof of \eqref{prop3-2}, with the extra weight $D(\phi, L)$, is similar by inputting the explicit formula for $D(\phi, L)$ from Proposition \ref{prop:explicit-formula-zeros}, so that we get
\begin{align}
    & \frac{1}{|\mathcal{F}|}\sum_{L \in \mathcal{F}} P_L(x)^k D(L,\phi) = \frac{1}{|\mathcal{F}|}\sum_{L \in \mathcal{F}} \sum_{p_i < x} \frac{a_L(p_1)\cdots a_L(p_k)}{\sqrt{p_1\cdots p_k}} \\
    & \qquad \times \left( \widehat{\phi}(0) - \frac{\gamma_{\mathcal{F}}}{2}\phi(0) + \sum_p \frac{a_L(p)}{p^{1/2}} \widehat{\phi}\left(\frac{\log p}{\log c(\mathcal{F})}\right) \frac{2\log p}{\log c(\mathcal{F})} + O\left( \frac{1}{\log c(\mathcal{F})}\right) \right). \notag
\end{align}
The part comprising $\hat{\phi}(0) - \tfrac12 \gamma_{\mathcal{F}}\phi(0)$ does not depend on $L$ and can be factored out, so that the corresponding contribution is exactly
\begin{equation}
\label{MT}
    \left( \widehat{\phi}(0) - \frac{\gamma_{\mathcal{F}}}{2}\phi(0) \right) \frac{1}{|\mathcal{F}|}\sum_{L \in \mathcal{F}} P_L(x)^k
\end{equation}
and, by the first part of Proposition \ref{prop3} proved above, this is $(\hat{\phi}(0) - \tfrac12\gamma_{\mathcal{F}}\phi(0))M_k\log\log(x)^{k/2} + o(1)$. The remaining contribution, that features the coefficients of $L$, is
\begin{equation}
    \frac{1}{|\mathcal{F}|}\sum_{L \in \mathcal{F}} \sum_{p_i < x,\, p} \frac{a_L(p_1)\cdots a_L(p_k)}{\sqrt{p_1\cdots p_k}}\frac{a_L(p)}{\sqrt{p}} \widehat{\phi}\left(\frac{\log p}{\log c(\mathcal{F})}\right) \frac{2\log p}{\log c(\mathcal{F})}.
\end{equation}
The proof follows the lines of the first case of the proposition and proceeds by induction over~$k$.

When $k=1$, we separate the summation depending on whether or not $p$ and $p_1$ are equal. When they are not equal, we use the multiplicativity of the coefficients so that the corresponding summation is 
\begin{equation}
    \sum_{\substack{p_1 < x,\, p \\ p \neq p_1}} \frac{1}{\sqrt{pp_1}} \widehat{\phi}\left(\frac{\log p}{\log c(\mathcal{F})}\right) \frac{2\log p}{\log c(\mathcal{F})}\frac{1}{|\mathcal{F}|}\sum_{L\in \mathcal{F}} a_L(pp_1).
\end{equation}
By Hypothesis \ref{Hyp:tf2}, the inner sum is bounded by $|\mathcal{F}|^{-{\gamma}}$ and then this quantity is bounded by $|\mathcal{F}|^{-{\gamma}}x/\log c(\mathcal{F})$, which is $o(\log\log x)$ as soon as $x < |\mathcal{F}|^{\gamma}$.

When $p_1 = p$, then the summation is 
\begin{equation}
    \frac{1}{|\mathcal{F}|}\sum_{L\in \mathcal{F}} \sum_{p<x} \frac{a_L(p)^2}{p} \widehat{\phi}\left(\frac{\log p}{\log c(\mathcal{F})}\right)\frac{2\log p}{\log c(\mathcal{F})}.
\end{equation}
By Hypothesis \ref{hyp:p-orthogonality} and summation by parts, we obtain that this is $O(1) = o(\log \log(x)^{1/2})$ as desired.

\subsubsection{High powers for \eqref{prop3-2}}

If $m \geqslant 3$ primes are equal, the sum is either
\begin{equation}
    \frac{1}{|\mathcal{F}|}\sum_{L \in \mathcal{F}} \left(\sum_{p_1<x} \frac{a_L(p_1)^m}{p_1^{m/2}}\right) \sum_{p_i<x,\, p}\frac{a_L(p_2)\cdots a_L(p_\ell)a_L(p)}{\sqrt{p_2\cdots p_\ell p}}\widehat{\phi}\left( \frac{\log p}{\log c(\mathcal{F})}\right)\frac{\log p}{\log c(\mathcal{F})}, 
\end{equation}
or, if the prime $p$ from the one-level density is among them, 
\begin{equation}
    \frac{1}{|\mathcal{F}|}\sum_{L \in \mathcal{F}} \left(\sum_{p<x} \frac{a_L(p)^m}{p^{m/2}} \widehat{\phi}\left( \frac{\log p}{\log c(\mathcal{F})}\right)\frac{\log p}{\log c(\mathcal{F})} \right) \sum_{p_i<x}\frac{a_L(p_2)\cdots a_L(p_\ell)}{\sqrt{p_2\cdots p_\ell}}, 
\end{equation}
where the second sum has been completed by those $p_i = p_1$ or $p_i = p$ respectively, as in the previous section, at the cost of an error term.
The expression between parentheses are $O(1)$ in both cases since $a_L(p) \ll p^\varepsilon$ uniformly in $L$. Lemma \ref{lem:magic-lemma} and the induction hypothesis then ensure that the remaining sum is $O(\log \log(x)^{k/2-1/2}) = o(\log\log(x)^k)$.

\subsubsection{Power $2$ for \eqref{prop3-2}}

If exactly two primes are paired, then one possibility is that they are among the $p_i$'s and the expression therefore is 
\begin{equation}
    \frac{1}{|\mathcal{F}|}\sum_{L \in \mathcal{F}} \left(\sum_{p_1<x} \frac{a_L(p_1)^2}{p_1} \right) \sum_{p_i<x,\, p}\frac{a_L(p_2)\cdots a_L(p_\ell)a_L(p)}{\sqrt{p_2\cdots p_\ell p}}\widehat{\phi}\left( \frac{\log p}{\log c(\mathcal{F})}\right)\frac{\log p}{\log c(\mathcal{F})}, 
\end{equation}
and the bracketed expression is $n_{\mathcal{F}}\log\log x + O(1)$ by Hypothesis \ref{hyp:p-orthogonality}. The error term is bounded away by Lemma \ref{lem:magic-lemma} and the induction hypothesis. The term in $\log \log x$ provides the only one contributing as a main term, and there are $k-1$ such pairing possibilities (for the same reasons as in Section \ref{xyz}), so that by the induction hypothesis applied to the remaining sum, the overall contribution is
\begin{equation}
    (k-1) \log\log(x) H_\phi M_{k-2} \log\log(x)^{(k-2)/2} = M_k \log\log(x)^{k/2}, 
\end{equation}
as desired.

The other possibility is that the two primes that are equal feature the prime $p$ coming from the one-level density and the expression is 
\begin{equation}
    \frac{1}{|\mathcal{F}|}\sum_{L \in \mathcal{F}} \left(\sum_{p<x} \frac{a_L(p)^2}{p} \widehat{\phi}\left( \frac{\log p}{\log c(\mathcal{F})}\right)\frac{\log p}{\log c(\mathcal{F})} \right) \sum_{p_i<x,\, p}\frac{a_L(p_2)\cdots a_L(p_\ell)}{\sqrt{p_2\cdots p_\ell}}, 
\end{equation}
The bracketed expression is $O(1)$ by Hypothesis $\ref{Hyp:tf}$, and Lemma \ref{lem:magic-lemma} therefore concludes the proof by induction.

\subsubsection{Power $1$ for \eqref{prop3-2}}

If no prime is paired, then by multiplicativity the coefficient $a_L(p_1\cdots p_kp)$ occurs alone and the summation, which looks like
\begin{equation}
    \sum_{p_i < x,\, p} \frac{1}{\sqrt{p_1\cdots p_k p}} \widehat{\phi}\left(\frac{\log p}{\log c(\mathcal{F})}\right) \frac{2\log p}{\log c(\mathcal{F})} \frac{1}{|\mathcal{F}|}\sum_{L \in \mathcal{F}} a_L(p_1 \cdots p_k p).
\end{equation}
The innermost sum is bounded by $|\mathcal{F}|^{-\gamma}$ by Hypothesis \ref{Hyp:tf}, so that the whole sum is bounded by $x^{(k+1)/2}|\mathcal{F}|^{-\gamma}$, which is $O(1) = o(\log\log(x)^{k/2})$ for $x < |\mathcal{F}|^{2\gamma/(k+1)}$.

This concludes the proof of Proposition \ref{prop3} by induction.
\end{proof}

\subsection{Moment method}

The following result, analogous to \cite[Lemma 1]{RS24} or \cite[Proposition 3.1]{hough_distribution_2014}, uses the ``moment method" to quantify the proportion of $L \in \mathcal{F}$ such that $P_L(x)$ falls into a specific interval.
\begin{prop}
\label{coro:moments}
We have, for all Schwartz function $\phi$ with compactly supported Fourier transform, 
\begin{equation}
    \sum_{\substack{L \in \mathcal{F} \\ P_L(x)/\sqrt{n_{\mathcal{F}}\log\log x} \in (\alpha, \beta)}} D(L, \phi) = (M(\alpha, \beta) + o(1))\sum_{L \in \mathcal{F}} D(L, \phi),
\end{equation}
where 
\begin{equation}\label{eq:M_alpha_beta}
    M(\alpha, \beta) := \frac{1}{\sqrt{2\pi}}\int_\alpha^\beta e^{-x^2/2} dx.
\end{equation}
\end{prop}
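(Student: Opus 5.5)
The plan is to deduce this from the weighted moment computation \eqref{prop3-2} of Proposition \ref{prop3} by the standard method of moments, adapted to the signed weight $D(L,\phi)$. Write $\mu_\phi := \frac{1}{|\mathcal{F}|}\sum_{L} D(L,\phi)$, which tends to $\int W_{\mathcal{F}}\phi$ by the first part of Theorem \ref{thm}. Assume first that this limit is nonzero; otherwise both sides are $o(|\mathcal{F}|)$ and the statement is read accordingly.

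\textbf{Reduction to a positive measure.} Since $\phi$ is Schwartz, $D(L,\phi)$ can be negative or complex, so I would first reduce to nonnegative weights. Under GRH the $\tilde\gamma$ are real, and every even Schwartz $\phi$ with $\widehat\phi$ supported in $(-\delta,\delta)$ can be written as a linear combination of functions of the form $\psi\bar\psi$ (i.e. $\widehat{\phi}=\widehat\psi*\widehat{\bar\psi}$) with the same support, plus nonnegative pieces. By linearity of both sides in $\phi$, it suffices to treat $\phi\geqslant 0$ on $\mathbb{R}$, so that $D(L,\phi)\geqslant 0$ and $\nu_X := \sum_L D(L,\phi)\,\delta_{P_L(x)/\sqrt{n_{\mathcal{F}}\log\log x}}$ is a positive measure on $\mathbb{R}$. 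Its total mass is $\sum_L D(L,\phi)$.

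\textbf{Moment matching.} By \eqref{prop3-2}, for every fixed $k$ and $x$ below a suitable power of $|\mathcal{F}|$ (chosen as a function tending to infinity slowly enough that every fixed $k$ is eventually admissible), the normalized measure $\nu_X/\nu_X(\mathbb{R})$ has $k$-th moment $M_k+o(1)$. The Gaussian is determined by its moments, since $M_k\leqslant k^{k/2}$ (Carleman's condition). Hence $\nu_X/\nu_X(\mathbb{R})$ converges weakly to the standard normal law. The Gaussian has no atoms, so the interval $(\alpha,\beta)$ is a continuity set and $\nu_X((\alpha,\beta))=(M(\alpha,\beta)+o(1))\nu_X(\mathbb{R})$, which is the claim.

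To avoid citing the abstract moment theorem, I could make this concrete instead. Approximate $\mathbf 1_{(\alpha,\beta)}$ from above and below by polynomials $Q_\pm$ of fixed degree on a large interval $[-R,R]$, with $\int (Q_+-Q_-)\,d\mathcal{N}<\varepsilon$ (Weierstrass applied to smoothed indicators). Then control the tail $|P_L|>R\sqrt{n_{\mathcal{F}}\log\log x}$ through the weighted second or fourth moment and Chebyshev's inequality, again using \eqref{prop3-2}.

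\textbf{Main obstacle.} The delicate step is the positivity reduction. The sandwich $Q_-\leqslant \mathbf 1\leqslant Q_+$ only transfers to sums weighted by $D(L,\phi)$ when $D(L,\phi)\geqslant 0$. The same is true of the Chebyshev tail bound, which must also be weighted by a nonnegative quantity. So I need a decomposition of $\phi$ into nonnegative admissible test functions, or alternatively a bound on $\frac1{|\mathcal{F}|}\sum_L |D(L,\phi)|\,\mathbf 1_{|P_L|>R\sqrt{\cdots}}$ obtained via Lemma \ref{lem:magic-lemma}. That lemma would combine the second moment of $D(L,\phi)$, which is bounded pointwise by $O(\log c(L)/\log c(\mathcal F))$ under GRH, with a high moment of $P_L$ from \eqref{prop3-1}. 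I would try the Hölder route first, since it avoids any structural decomposition of $\phi$.
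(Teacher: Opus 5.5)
Your overall route is the paper's. Its proof likewise feeds \eqref{prop3-2} into the method of moments and then simply ``approximates $\mathbf{1}_{(\alpha,\beta)}$ by a polynomial $R$''. You are right that this one-line step does not go through as written for a general Schwartz $\phi$. Polynomials do not approximate the indicator uniformly on $\mathbb{R}$, and a sandwich $Q_-\leqslant\mathbf{1}_{(\alpha,\beta)}\leqslant Q_+$ only transfers to sums weighted by $D(L,\phi)$ when those weights are nonnegative. That is the only case the paper actually uses, namely the Fej\'er kernel in Proposition~\ref{prop4}. Your caveat about $\int W_{\mathcal F}\phi=0$ is also correct: the argument really yields $\frac{1}{|\mathcal F|}\sum_{(\alpha,\beta)}D(L,\phi)=M(\alpha,\beta)\int W_{\mathcal F}\phi+o(1)$.

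The route you say you would try first fails, however. The pointwise bound $D(L,\phi)\ll\log c(L)/\log c(\mathcal F)$, i.e.\ $O(1)$, does not follow from GRH. For $\phi\geqslant 0$ one has $D(L,\phi)\geqslant\phi(0)\,\mathrm{ord}_{s=1/2}L(s)$, and GRH only bounds the central order (and more generally the number of zeros in a window of length $1/\log c(L)$) by $O(\log c(L)/\log\log c(L))$. An averaged bound $\frac{1}{|\mathcal F|}\sum_L D(L,\phi)^2=O(1)$ is a two-level density statement, not supplied by the hypotheses. Expanding it through the explicit formula produces averages of $a_L(pq)$ with $p\neq q$ up to $c(\mathcal F)^{\delta}$. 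Hypothesis~\ref{Hyp:tf2} controls these only when $c(\mathcal F)^{\delta}$ is below a power of $|\mathcal F|$, and Hypothesis~\ref{Hyp:tf}, which concerns single primes, does not address them at all.

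So take the positivity route, but via a majorant rather than a $\psi\bar\psi$ factorization. The factorization, with the support constraint, is true but needs a Dixmier--Malliavin type theorem. Instead, with $\phi_0$ the Fej\'er kernel, set $G(y)=\phi_0(y)+\phi_0(y+\tfrac12)+\phi_0(y-\tfrac12)$. Then $G$ is even, $\widehat G(\xi)=\widehat{\phi}_0(\xi)(1+2\cos\pi\xi)$ is supported in $[-1,1]$, and $G(y)\geqslant\pi^{-2}(|y|+\tfrac12)^{-2}$ because $\sin^2+\cos^2=1$. For $\delta'<\delta$ put $F(x)=G(\delta' x)$, so that $|\phi|\leqslant CF$. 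Then $\phi=(\phi+CF)-CF$ is a difference of nonnegative functions with Fourier support in $(-\delta,\delta)$. Your Carleman/portmanteau argument applies to each piece under GRH, and the two $o(|\mathcal F|)$ errors are absorbed once $\int W_{\mathcal F}\phi\neq 0$. The only cost is using \eqref{prop3-2} for the non-Schwartz $F$, which the paper does anyway in Proposition~\ref{prop4}.

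Finally, in your concrete variant the tails of $Q_\pm$ grow like $|t|^{\deg Q_\pm}$. Chebyshev therefore needs weighted moments of order about $2\deg Q_\pm$, not just the second or fourth; these are available from \eqref{prop3-2}.
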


\begin{proof}
Asymptotically, Proposition \ref{prop3} shows that the $\ell$-th moment of $P_L(x)/\sqrt{n_{\mathcal{F}}\log\log x}$ behaves as the $\ell$-th moment of the normal distribution, i.e. when $x$ grows to infinity,
\begin{equation}
    \sum_{L \in \mathcal{F}} \left( \frac{P_L(x)}{\sqrt{n_{\mathcal{F}}\log\log x}} \right)^\ell D(L, \phi) \sim \frac{1}{\sqrt{2\pi}}\int_{\mathbb{R}} x^\ell e^{-x^2/2} dx \int_{\mathbb{R}} W_{\mathcal{F}}\phi,
\end{equation}
for all $\ell \geqslant 0$, so we deduce that, for any polynomial $R \in \mathbb{R}[X]$, 
\begin{equation}
    \frac{1}{|\mathcal{F}|} \sum_{L \in \mathcal{F}} R\left( \frac{P_L(x)}{\sqrt{n_{\mathcal{F}}\log\log x}} \right) D(L, \phi) \sim \frac{1}{\sqrt{2\pi}}\int_{\mathbb{R}} R(x) e^{-x^2/2} dx \int_{\mathbb{R}} W_{\mathcal{F}}\phi,
\end{equation}
and by approximating the characteristic function $\mathbf{1}_{(\alpha, \beta)}$ by a polynomial $R$, we deduce that 
\begin{align}
    \frac{1}{|\mathcal{F}|}\sum_{\substack{f \in \mathcal{F} \\ P_L(x)/\sqrt{n_{\mathcal{F}}\log\log x} \in (\alpha, \beta)}} D(L, \phi) & 
    = \sum_{f \in \mathcal{F}} \mathbf{1}_{(\alpha, \beta)}\left( \frac{P_L(x)}{\sqrt{n_{\mathcal{F}}\log\log x}} \right) D(L, \phi) \\
    & \sim \frac{1}{\sqrt{2\pi}} \int_{\mathbb{R}} \mathbf{1}_{(\alpha, \beta)}(x) e^{-x^2/2} dx \int_{\mathbb{R}} W_{\mathcal{F}}\phi = M(\alpha, \beta) \int_{\mathbb{R}} W_{\mathcal{F}}\phi,
\end{align}
as claimed.
\end{proof}

\section{Endgame}

\subsection{Amplifying zeros}
\label{subsec:amplifying-zeros}

Analogously to \cite{RS24}, we use the previous results to explain that, even if there may be $L$-functions in the family $\mathcal{F}$ having small zeros such that the error term in Proposition \ref{prop2} is harmful, there is only a very small proportion of them in the family. This can be controlled via the density results obtained in Section \ref{sec:zeros} which provide information about the low-lying zeros.

\begin{prop}\label{prop4}
For all $\alpha < \beta$, the number of $L\in \mathcal{F}$ such that $P_L(x) / \sqrt{n_{\mathcal{F}}\log \log x} \in (\alpha, \beta)$ and such that there are no zeros with $|\gamma_L| \leqslant (\log X \log\log X)^{-1}$ (where $x=X^{1/\log \log \log X}$) is larger than 
\begin{equation}
    \eta(\mathcal{F}, \delta) M(\alpha, \beta) |\mathcal{F}|
\end{equation}
where 
\begin{equation}
    M(\alpha, \beta) = \frac{1}{\sqrt{2\pi}}\int_\alpha^\beta e^{-x^2/2} dx.
\end{equation}
\end{prop}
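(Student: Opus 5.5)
The plan is to run the nonvanishing argument of Section~\ref{subsec:llz-nonvanishing}, but with the family weighted by the indicator that $P_L(x)/\sqrt{n_{\mathcal{F}}\log\log x}\in(\alpha,\beta)$, and using Proposition~\ref{coro:moments} in place of the unweighted density result~\eqref{eq:llz-result}.

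Set $\mathcal{F}_{\alpha,\beta}$ to be the set of $L\in\mathcal{F}$ with $P_L(x)/\sqrt{n_{\mathcal{F}}\log\log x}\in(\alpha,\beta)$. We want to count those $L\in\mathcal{F}_{\alpha,\beta}$ with no zero satisfying $|\gamma_L|\leqslant(\log X\log\log X)^{-1}$.

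\textbf{Test function.} Fix a nonnegative even Schwartz function $\phi$ with $\widehat\phi$ supported in $(-\delta,\delta)$ and $\phi(0)=1$, chosen as the optimal function from \cite[Appendix A]{ILS} for the symmetry type. The optimal value satisfies
\[
\eta(\mathcal{F},\delta)=1-\int W_{\mathcal{F}}\phi,
\]
or $1-\tfrac12\int W_{\mathcal{F}}\phi$ when all $\varepsilon_L=1$.

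\textbf{Positivity.} If $L$ has a zero with $|\gamma_L|\leqslant(\log X\log\log X)^{-1}$, then its normalized height satisfies $|\tilde\gamma|\ll 1/\log\log X=o(1)$, since $\log c(L)\asymp\log X$. By continuity, $\phi(\tilde\gamma)\geqslant 1-o(1)$. Since $\phi\geqslant 0$, this gives $D(L,\phi)\geqslant 1-o(1)$ for every such bad $L$. When $\varepsilon_L=1$ we get $D(L,\phi)\geqslant 2-o(1)$ instead. The reason is that the zeros come in pairs $\gamma,-\gamma$ (or form a double zero at $1/2$), so a small zero forces two terms near the origin.

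\textbf{Counting bad $L$.} Summing over $\mathcal{F}_{\alpha,\beta}$ and applying Proposition~\ref{coro:moments}, together with Proposition~\ref{prop3} at $k=0$ and~\eqref{eq:llz-result}, we get
\[
\#\{\text{bad } L\in\mathcal{F}_{\alpha,\beta}\}\leqslant(1+o(1))\sum_{L\in\mathcal{F}_{\alpha,\beta}}D(L,\phi)=(M(\alpha,\beta)+o(1))\,|\mathcal{F}|\int W_{\mathcal{F}}\phi.
\]
The case $\varepsilon_L=1$ carries an extra factor $\tfrac12$.

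\textbf{Size of $\mathcal{F}_{\alpha,\beta}$.} Apply Proposition~\ref{coro:moments} with $D$ replaced by the constant weight $1$; equivalently, use \eqref{prop3-1} and polynomial approximation. This gives $|\mathcal{F}_{\alpha,\beta}|=(M(\alpha,\beta)+o(1))|\mathcal{F}|$.

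\textbf{Conclusion.} Subtracting the bad count from $|\mathcal{F}_{\alpha,\beta}|$ gives at least
\[
\bigl(1-\int W_{\mathcal{F}}\phi\bigr)M(\alpha,\beta)|\mathcal{F}|+o(|\mathcal{F}|)=\eta(\mathcal{F},\delta)M(\alpha,\beta)|\mathcal{F}|+o(|\mathcal{F}|),
\]
as claimed, up to an arbitrarily small loss that can be absorbed.

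\textbf{Main obstacle: the range of $x$.} Proposition~\ref{prop3} and Proposition~\ref{coro:moments} are valid only when $x$ is at most a power $|\mathcal{F}|^{c/k}$, with $c$ depending on $\gamma$. The moment method, however, needs every fixed $k$. The choice $x=X^{1/\log\log\log X}$ is exactly what makes this work. For each fixed $k$, $x$ is eventually smaller than any fixed power of $X$, hence than any fixed power of $|\mathcal{F}|$, assuming $\log|\mathcal{F}|\asymp\log c(\mathcal{F})$. At the same time $\log\log x\sim\log\log X$, so the normalization $n_{\mathcal{F}}\log\log x$ is unaffected and matches $V_{\mathcal{F}}$ later.

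\textbf{Uniformity of the polynomial approximation.} The approximation of $\mathbf 1_{(\alpha,\beta)}$ in Proposition~\ref{coro:moments} must be uniform when the nonnegative weight $D(L,\phi)$ is present. I would handle this by sandwiching the indicator between polynomials, as in \cite[Lemma 1]{RS24}, using that the weighted moments determine the Gaussian measure scaled by $\int W_{\mathcal{F}}\phi$. Positivity of $\phi$ makes these one-sided bounds legitimate.
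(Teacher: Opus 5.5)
Your proposal is correct and follows the paper's argument: you restrict to the window $P_L(x)/\sqrt{n_{\mathcal{F}}\log\log x}\in(\alpha,\beta)$, use Proposition~\ref{coro:moments} and its unweighted analogue, and use positivity of $\phi$ to bound the number of $L$ in the window having a small zero by $(1+o(1))\sum_{(\alpha,\beta)} D(L,\phi)$ (with an extra factor $\tfrac12$ when all $\varepsilon_L=1$), then subtract this from the size of the window. The only real difference is the test function: the paper uses the Fej\'er kernel $\phi(x)=\phi_0(\delta x)$, whose values $1-\kappa$ are exactly the entries of Table~\ref{table:eta}, whereas your ILS-optimal $\phi$ gives a bound at least as large; your remarks on the admissible range of $x$ and on sandwiching the indicator by polynomials make explicit points that the paper leaves implicit.
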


\begin{rk}
Note that this is where Bombieri-Hejhal \cite{BH} and Hough \cite{hough_distribution_2014,hough_distribution_2014err} get a direct conclusion using a zero density hypothesis, ensuring that this error term does not contribute. Under the assumption toward the Katz-Sarnak conjecture in place of zero density hypothesis, this cannot be ensured and this is where a certain proportion of $L$-functions in the family is lost.
\end{rk}

\begin{proof}
Choose for $\phi$ the explicit Féjer kernel up to the maximal Fourier support~$\delta$ allowed by the low-lying zero assumption. More precisely, let
\begin{equation}
    \phi_0(x) := \left( \frac{\sin \pi x}{\pi x}\right)^2, \qquad \widehat{\phi}_0(y) = \max(1-|y|, 0), 
\end{equation}
so that $\widehat{\phi}_0(y)$ is supported in $(-1,1)$,
and $\phi(x) = \phi_0(x\delta)$ so that $\widehat{\phi}(y) = \delta^{-1}\widehat{\phi}_0(x/\delta)$ is compactly supported in $(-\delta, \delta)$. Proposition \ref{coro:moments} implies
\begin{align}
    \frac{1}{|\mathcal{F}|} \sum_{\substack{f \in \mathcal{F} \\ P_L(x)/\sqrt{\log\log x} \in (\alpha, \beta)}} D(L, \phi) & \sim M(\alpha, \beta) \frac{1}{|\mathcal{F}|}\sum_{f \in \mathcal{F}} D(L, \phi),
\end{align}
and, by the low-lying zero limiting density \eqref{Hyp:weighted-llz}, we deduce
\begin{align}
      M(\alpha, \beta) \frac{1}{|\mathcal{F}|}\sum_{f \in \mathcal{F}} D(L, \phi) \sim M(\alpha, \beta) \int_{\mathbb{R}} W_{\mathcal{F}}\phi.
\end{align}
Iwaniec, Luo and Sarnak \cite[(1.42)--(1.46)]{ILS} computed the values of $\kappa := \int W_{\mathcal{F}}\phi$ for this specific choice of $\phi$ and gave its dependency in $\delta$.

\begin{center}
\begin{table}[ht]
\begin{tabular}{ |c|c|c|c| } 
 \hline
 Type & $\delta<1$ & $\delta \geqslant 1$ & $\delta=\infty$ \\ 
 \hline
 O & $\tfrac{1}{\delta} + \tfrac{1}{2}$ & $\tfrac{1}{\delta} + \tfrac{1}{2}$ & $\tfrac12$ \\ 
 \hline
  SO(even) & $\tfrac{1}{\delta} + \tfrac{1}{2}$ & $\tfrac{2}{\delta} - \tfrac{1}{2\delta^2}$ & $0$ \\ 
 \hline
  SO(odd) & $\tfrac{1}{\delta} + \tfrac{1}{2}$ & $1 + \tfrac{1}{2\delta^2}$ & $1$ \\ 
 \hline
    Sp & $\tfrac{1}{\delta} - \tfrac{1}{2}$ & $\tfrac{1}{2\delta^2}$ & $0$ \\ 
 \hline
    U & $\tfrac{1}{\delta}$ & $\tfrac{1}{\delta}$ & $0$ \\
 \hline
\end{tabular}
\caption{Values of $\kappa = \int W_{\mathcal{F}}\phi$ depending on $\delta$ and the type of symmetry}
 \label{table:kappa}
\end{table}
\end{center}

\vspace{-1.6cm}

We can now use the similar amplification argument as in \cite{RS24} or \cite{LS}. Let $\Phi := D(L, \phi)$ to lighten notation. Split the above sum depending on whether or not the $L$-function has too small zeros: 
\begin{equation}
  \sum_{(\alpha, \beta)} \Phi = \sum_{\substack{(\alpha, \beta) \\ \exists}} \Phi + \sum_{{\substack{(\alpha, \beta) \\ \nexists}}} \Phi 
\end{equation}
where
\begin{align}
    \sum_{(\alpha, \beta)} \Phi & := \sum_{\substack{L \in \mathcal{F} \\ P_L(x)/\sqrt{n_{\mathcal{F}}\log\log x} \in (\alpha, \beta)}} \Phi, \\
    \sum_{\substack{(\alpha, \beta) \\ \exists}} \Phi & := \sum_{\substack{L \in \mathcal{F} \\ \exists |\gamma_L| \leqslant \ell\\ P_L(x)/\sqrt{n_{\mathcal{F}}\log\log x} \in (\alpha, \beta)}} \Phi, \\
    \sum_{{\substack{(\alpha, \beta) \\ \nexists}}} \Phi & := \sum_{\substack{L \in \mathcal{F} \\ \nexists |\gamma_L| \leqslant \ell\\ P_L(x)/\sqrt{n_{\mathcal{F}}\log\log x} \in (\alpha, \beta)}} \Phi,
\end{align}
and $\ell = (\log X \log\log X)^{-1}$.
The weights $ \phi(\gamma)$ in $\Phi$ are always non-negative, since the function $\phi$ we chose is non-negative. Since $\ell \to 0$ when $x$ grows, we have $\phi(\gamma) \geqslant \phi(0) - \varepsilon = 1 - \varepsilon$. We can therefore write
\begin{equation}
      \sum_{(\alpha, \beta)} \Phi = \sum_{\substack{(\alpha, \beta) \\ \exists}} \Phi + \sum_{{\substack{(\alpha, \beta) \\ \nexists}}} \Phi \geqslant \sum_{\substack{(\alpha, \beta) \\ \exists}} 1 + \sum_{{\substack{(\alpha, \beta) \\ \nexists}}} \Phi = \sum_{(\alpha, \beta)} 1 + \sum_{{\substack{(\alpha, \beta) \\ \nexists}}} (\Phi-1) .
\end{equation}
On the other hand, the above consequence of the moment method and of the limiting one-level density result are rephrased as 
\begin{align}
    \sum_{(\alpha, \beta)} 1 \sim M(\alpha, \beta) |\mathcal{F}| \quad \text{and} \quad \sum_{(\alpha, \beta)} \Phi \sim M(\alpha, \beta) \kappa |\mathcal{F}|, 
\end{align}
relating the restricted sums to the corresponding whole sums. We therefore deduce, since $0 \leqslant \phi \leqslant 1$, 
\begin{equation}
    \sum_{\substack{(\alpha, \beta) \\ \nexists}} 1 \geqslant \sum_{\substack{(\alpha, \beta) \\ \nexists}} (1-\Phi) \geqslant \sum_{(\alpha, \beta)} 1 - \sum_{(\alpha, \beta)} \Phi \sim M(\alpha, \beta)(1-\kappa)|\mathcal{F}|.
\end{equation}
We can then lower bound the smoothed quantity of $f \in \mathcal{F}$ having zeros of moduli smaller than $\ell$, viz.
\begin{equation}
    \sum_{\substack{(\alpha, \beta) \\ \nexists}} 1 \geqslant (1-\kappa) M(\alpha, \beta) |\mathcal{F}|,
\end{equation}
as desired, giving the values of $\eta(\mathcal{F}, \delta) := 1-\kappa$ recorded in Table \ref{table:eta}.

If the $L$-functions in $\mathcal{F}$ all have signs $\varepsilon_L = 1$ in the functional equation, then each small zeros arise in pair: either the zero is at the central point i.e. $\gamma = 0$ and then it is necessarily a double zero as explained in Section \ref{subsec:llz-nonvanishing}; or it is nonzero, and its conjugate has the same modulus, in particular it is also small. The above argument then carries on and gives the improved values of $\eta(\mathcal{F}, \delta) = 1 - \tfrac{\kappa}{2}$ stated in Table \ref{table:eta}. In both cases, it is the same value as the lower bound for the density of nonvanishing obtained in the first part of Theorem \ref{thm}.
\end{proof}

\subsection{Few zeros contributing a lot}

The following proposition is the analogue of \cite[Lemma~2]{RS24}, and quantifies how rare are the $L \in \mathcal{F}$ such that the contribution from the sum over zeros in Proposition \ref{prop2} is large.

\begin{prop}\label{prop5}
For all $X\gg 1$, the number of $L \in \mathcal{F}$ such that 
\begin{equation}
    \sum_{|\gamma| \geqslant (\log X \log\log X)^{-1}} \log(1+(\gamma_L \log x)^{-2}) \geqslant \log \log \log (X)^3 
\end{equation}
is $\ll |\mathcal{F}|/\log \log \log X$.
\end{prop}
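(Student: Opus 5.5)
The plan is Markov's inequality applied to the average over the family of the zero sum
\[
Z_L := \sum_{|\gamma| \geqslant \ell} \log\bigl(1+(\gamma \log x)^{-2}\bigr), \qquad \ell = (\log X\log\log X)^{-1}.
\]
The target bound is
\[
\frac{1}{|\mathcal{F}|}\sum_{L\in\mathcal{F}} Z_L \ll \log\log\log (X)^2 .
\]
Markov's inequality at the threshold $\log\log\log(X)^3$ then shows that the exceptional set has size $\ll |\mathcal{F}|/\log\log\log X$, as claimed. Throughout, $x = X^{1/\log\log\log X}$, so $\log X/\log x = \log\log\log X$. We take $c(\mathcal{F})\asymp X$, so that the normalized zeros are $\tilde\gamma = \gamma\log X/2\pi$ up to constants.

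Step 1 (majorant). Compare $\log(1+(\gamma\log x)^{-2})$ with a nonnegative test function in the normalized variable $\tilde\gamma$. With $u = \gamma\log x$ we have $u \asymp \tilde\gamma/\log\log\log X$, so the condition $|\gamma|\geqslant\ell$ becomes $|\tilde\gamma| \gg (\log\log X)^{-1}$. We split the zeros into three ranges.

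\emph{Large range}, $|\tilde\gamma| \geqslant \log\log\log X$, i.e. $|u|\gg 1$. Here $\log(1+u^{-2}) \ll u^{-2} \asymp (\log\log\log X)^2/\tilde\gamma^2$. Summing over zeros with the Riemann–von Mangoldt density (about $\log X$ zeros per unit interval in $\gamma$, i.e. bounded density in $\tilde\gamma$) gives $O(\log\log\log X)$ pointwise in $L$, uniformly.

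\emph{Middle range}, $1 \leqslant |\tilde\gamma| \leqslant \log\log\log X$. Here $\log(1+u^{-2}) \ll \log\log\log\log X$. The number of zeros is $\ll \log\log\log X$, again pointwise by the zero-counting bound in short intervals under GRH. This gives $\ll (\log\log\log X)^{1+\varepsilon}$.

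\emph{Small range}, $(\log\log X)^{-1} \ll |\tilde\gamma| \leqslant 1$. Here $\log(1+u^{-2}) \ll \log\log\log X + \log\log\log X \ll \log\log\log X$. The number of such zeros is controlled on average by the one-level density: take $\phi$ to be the Fejér kernel of Proposition~\ref{prop4} (nonnegative, $\geqslant c>0$ on $[-1,1]$). Then
\[
\#\{\gamma : |\tilde\gamma|\leqslant 1\} \ll D(L,\phi),
\]
and by part (1) of Theorem~\ref{thm} the average of $D(L,\phi)$ is $\kappa+o(1) = O(1)$. So the small range contributes $O(\log\log\log X)$ on average.

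Step 2 (combining). Adding the three ranges gives an average $\ll (\log\log\log X)^{1+\varepsilon} \leqslant \log\log\log(X)^2$. Markov's inequality then concludes.

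The main obstacle is the middle and large ranges, which need pointwise short-interval zero counts $N(T+1/\log X)-N(T)\ll 1+\log(T+2)/\log\log X$-type bounds under GRH for the generic $L$ of Section~\ref{sec:$L$-functions}. If these are not available uniformly in the family, the fallback is a second averaged argument. For this, take a single majorant $\psi(\tilde\gamma)\geqslant$ the summand for $|\tilde\gamma|\gg 1$, with $\psi$ of the form $(\log\log\log X)^2\,\phi(\tilde\gamma/\log\log\log X)$-like scaling. Here one must be careful that the compact Fourier support $(-\delta,\delta)$ is respected after rescaling, since dilating $\tilde\gamma$ shrinks the support. Then the averaged explicit formula (Proposition~\ref{prop:explicit-formula-zeros} with Hypothesis~\ref{Hyp:tf}) bounds $\frac{1}{|\mathcal{F}|}\sum_L\sum_\gamma \psi(\tilde\gamma)$ by $\widehat\psi(0)$ plus $O(1)$ terms, which is again $\ll(\log\log\log X)^{2}$.
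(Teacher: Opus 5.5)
Your skeleton is right and matches what the paper imports from \cite[Lemma 2]{RS24}: show that the family average of $Z_L$ is $\ll(\log\log\log X)^2$, then apply Markov at the threshold $(\log\log\log X)^3$. Your treatment of the range $|\tilde\gamma|\leqslant 1$ via the one-level density is also fine, provided you dilate the Fej\'er kernel so that it has no zero in $[-1,1]$; note that $\phi_0(\delta x)$ vanishes at $x=1/\delta$ when $\delta\geqslant1$.

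\textbf{The gap.} You treat the middle and large ranges \emph{pointwise} in $L$, and GRH gives no such control in the conductor aspect. Under GRH one only has $N(T+h)-N(T)=\frac{h}{2\pi}\log(\cdots)+O(\log c(L)/\log\log c(L))$. So the best pointwise bound GRH provides for the number of zeros of a given $L$ with $|\tilde\gamma|\leqslant\log\log\log X$ is $O(\log X/\log\log X)$, not $O(\log\log\log X)$. The bound $1+\log(T+2)/\log\log X$ you quote is the $t$-aspect bound for a fixed $L$-function, not one uniform over the family. Likewise, Riemann--von Mangoldt only counts zeros in windows of length $\gg1$ in $\gamma$, i.e.\ length $\gg\log X$ in $\tilde\gamma$. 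Your weights $(\log\log\log X)^2\tilde\gamma^{-2}$, however, live on $\tilde\gamma\asymp\log\log\log X$, so there is no pointwise $O(\log\log\log X)$ for the large range either. As written, the main route does not close.

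\textbf{The fallback.} It points in the right direction but loses exactly the factor you need. For $\psi(t)=(\log\log\log X)^2\phi(t/\log\log\log X)$ one has $\widehat\psi(0)=(\log\log\log X)^3\widehat\phi(0)$, which equals the Markov threshold, so nothing is gained. There are two further problems: part (1) of Theorem~\ref{thm} only applies to a fixed test function, and a single Fej\'er kernel vanishes at the nonzero integers, so it cannot majorize a $t^{-2}$ tail.

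\textbf{The fix.} No rescaling and no splitting are needed.
\begin{itemize}
\item For $|\tilde\gamma|\geqslant1$, use $\log(1+y)\leqslant y$: the summand is $\leqslant(\gamma\log x)^{-2}\ll(\log\log\log X)^2\tilde\gamma^{-2}$.
\item For $(\log\log X)^{-1}\ll|\tilde\gamma|\leqslant1$, the summand is $\ll\log\log\log X$.
\end{itemize}
Hence for every zero with $|\gamma|\geqslant(\log X\log\log X)^{-1}$ the summand is $\ll(\log\log\log X)^2h(\tilde\gamma)$. Here $h$ is one fixed even nonnegative function with $\widehat h$ supported in $(-\delta,\delta)$ and $h(t)\gg\min(1,t^{-2})$. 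For example, take a dilate of $\phi_0(t)+\phi_0(t-\tfrac12)+\phi_0(t+\tfrac12)$, which is $\geqslant\pi^{-2}(|t|+\tfrac12)^{-2}$ everywhere. Part (1) of Theorem~\ref{thm} with this fixed $h$ gives $\frac{1}{|\mathcal F|}\sum_{L}\sum_\gamma h(\tilde\gamma)=\int W_{\mathcal F}h+o(1)=O(1)$. So the average of $Z_L$ is $\ll(\log\log\log X)^2$, and Markov concludes. This purely averaged argument is what the paper's reference to \cite{RS24} supplies.
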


\begin{proof}
The same proof as in \cite[Lemma 2]{RS24} holds \textit{mutatis mutandis}.
\end{proof}

\subsection{Proof of the theorem}

This closely follows \cite{RS24} final arguments, now that all the corresponding estimates have been established. We write it here for the sake of completeness. Using the notation \eqref{eq:def-PL}, recall from \eqref{aaa} that
\begin{equation*}
\begin{aligned}
    \frac{1}{|\mathcal{F}|}\sum_{L\in \mathcal{F}}\log L\left(\tfrac{1}{2}\right) = \frac{\gamma_{\mathcal{F}}}{2}\log \log x &+ \frac{1}{|\mathcal{F}|}\sum_{L\in \mathcal{F}} P_L(x) \\
    &+ O\left(\frac{\log c(\mathcal{F})}{\log x} + \frac{1}{|\mathcal{F}|}\sum_{L\in \mathcal{F}} \sum_\gamma \log\left(1 + (\gamma \log x)^{-2}\right)\right).
\end{aligned}
\end{equation*}

By Proposition \ref{prop4}, there is a proportion at least $\eta(\mathcal{F}, \delta)M(\alpha, \beta)$ of $L \in \mathcal{F}$ such that
$$ \frac{P_L(x)}{\sqrt{n_{\mathcal{F}} \log \log x}} \in (\alpha, \beta) $$
and $L(s)$ has no zeros such that $|\gamma| \leqslant (\log X \log \log X)^{-1}$. By Proposition \ref{prop5}, we can remove $|\mathcal{F}|/\log \log \log X = o(|\mathcal{F}|)$ of such elements to ensure that the contribution of zeros $|\gamma| \geqslant (\log X \log \log X)^{-1}$ is $O\left((\log \log \log X)^3\right)$. All in all, there are at least $\eta(\mathcal{F}, \delta)M(\alpha, \beta)|\mathcal{F}|$ elements $L\in \mathcal{F}$ such that 
\begin{equation}
    \frac{\log L(\tfrac12) - \tfrac12 \gamma_{\mathcal{F}} \log \log c(\mathcal{F})}{\sqrt{n_{\mathcal{F}} \log \log c(\mathcal{F})}} + O\left( \frac{(\log \log \log |F|)^3}{\sqrt{\log \log c(\mathcal{F})}}\right) \in (\alpha, \beta).
\end{equation}
This concludes the proof of Theorem \ref{thm}.

\subsection*{Acknowledgment} We are grateful to Youness Lamzouri, Maksym Radziwi\l{}\l{} and Kannan Soundararajan for enlightening discussions. 
This work started when D. L. was visiting Kyushu University, continued while A. I. S. was visiting Universit\'e de Lille, and ended when both authors were visiting San Jose State University; we thank all these institutions for very good working environments. We acknowledge support from the R-CDP-24-004-C2EMPI project, the~CNRS (IEA), JSPS KAKENHI Grant Number 22K13895, and Kyushu University International Research Leader Training Program (EBXU0101).

\bibliographystyle{alpha}
\bibliography{LLZBibliography}

\end{document}